\documentclass[11pt,a4paper,reqno]{amsart}
\usepackage{amsfonts}
\usepackage{amssymb}
\usepackage{hyperref}
\usepackage{color}

\setcounter{MaxMatrixCols}{10}

\numberwithin{equation}{section}
\newtheorem{theorem}{Theorem}[section]
\newtheorem{lemma}[theorem]{Lemma}
\newtheorem{remark}[theorem]{Remark}
\newtheorem{corollary}[theorem]{Corollary}
\newtheorem{proposition}[theorem]{Proposition}

\newtheorem{example}[theorem]{Example}

\allowdisplaybreaks


\setlength{\textheight}{23 cm}
\setlength{\textwidth}{15 cm}
\setlength{\topmargin}{0cm}
\setlength{\oddsidemargin}{1.0 cm}
\setlength{\evensidemargin}{1.0 cm}



%


\def\enddoc{\end{document}}







\def\FRAME#1#2#3#4#5#6#7#8
{
\begin{figure}[ptbh]
	\begin{center}
		\includegraphics[height=#3]{#7}
		\caption{#5}
		\label{#6}
	\end{center}
\end{figure}
}

\begin{document}
	
\title[biharmonic elliptic inequalities]{On positive solutions of biharmonic elliptic inequalities on Riemannian manifolds}

\author[Sun]{Yuhua Sun}
\address{School of Mathematical Sciences and LPMC, Nankai University, 300071 Tianjin, P. R. China}
\email{sunyuhua@nankai.edu.cn}
\thanks{ Sun was supported by the National Natural Science Foundation of China (No.11501303).}

\author[Zheng]{Yadong Zheng}
\address{School of Mathematical Sciences and LPMC, Nankai University, 300071 Tianjin, P. R. China}
\email{yadongzheng2017@sina.com}

\subjclass[2010]{Primary:  35J92; Secondary: 35B45.}

\keywords{$m$-Laplacian; elliptic equations; a priori estimates; Liouville's theorems.}
	
	\subjclass[2010]{Primary 58J05; Secondary 35J91}
	\keywords{Biharmonic, manifolds, critical exponent}
	
\begin{abstract}
We investigate the non-existence and existence of positive solutions to biharmonic elliptic 
inequalities  on manifolds.
Using Green function and volume growth conditions, we establish the critical exponent for biharmonic problem.
\end{abstract}
	
	\maketitle
	\tableofcontents


\section{Introduction}
Let $M$ be a geodesically complete non-compact Riemannian manifolds with $\text{dim} M>2$, and $\Delta$ be the Laplace-Beltrami operator on $M$.
Further, let $K$ be a compact subset of $M$ such that $M\setminus K$ is a connected domain. We emphasize that $K$ here can be allowed
to be empty, or a singular point of $M$.

Consider the following biharmonic elliptic differential inequality
\begin{equation}\label{BI}
(-\Delta)^2u\geq\Phi(x)u^p\quad\mbox{in $M\setminus K$},
\end{equation}
where 
 $u$ is some unknown nontrivial non-negative $C^4$ function, $p>1$ and $\Phi$ is a given positive function. 

The existence and non-existence of positive solutions of (\ref{BI}) and its related problems have a long history, and have attracted a lot of attentions.
In \cite{Gidas}, Gidas and Spruck considered scalar Lane-Emden equation
\begin{align}\label{GS}
	\Delta u+u^p=0\quad{\rm in}\;\mathbb R^n,
\end{align}
 where $n>2$. They proved that if
$$1<p<\frac{n+2}{n-2},$$
then any non-negative solution of  (\ref{GS}) is zero.
While if $p\geq \frac{n+2}{n-2}$, (\ref{GS}) admits  positive solutions. In particular, for the case $p=\frac{n+2}{n-2}$, all positive solutions of  (\ref{GS}) take the form of
$$u(x)=c_n\left(1+|x|^2\right)^{-\frac{n-2}{2}},$$
with some $c_n>0$.

While if one studied problem (\ref{GS}) in exterior domains $\mathbb R^n\setminus \{0\}$, the critical exponent jumps from $\frac{n+2}{n-2}$ to $\frac{n}{n-2}$. This interesting result  is due to Bidaut-V$\acute{\rm e}$ron  \cite{Bidaut}, more precisely, if
$$1<p\leq \frac{n}{n-2},$$
then  (\ref{GS}) in $\mathbb R^n \setminus \{0\}$ admits no positive solution.  While, for $p>\frac{n}{n-2}$,  the function defined by
$$u(x)=c_{n,p}|x|^{-\frac{2}{p-1}}$$
is a positive solution to (\ref{GS}) for some suitable chosen $c_{n,p}$.

Consider inequality version of (\ref{GS}), namely
\begin{align}\label{GSI}
	\Delta u+u^p\leq0.
\end{align}
 It is worth pointing out that problems (\ref{GSI}) in $\mathbb R^n$ and in exterior domains share the same critical exponent $\frac{n}{n-2}$, see the works of Mitidieri and Pohozaev \cite{Mitidieri}, and
 Bidaut-V$\acute{\rm e}$ron \cite{Bidaut}.  The sharpness of $\frac{n}{n-2}$ can be obtained by  the function
$$u(x)=c_p\left(1+|x|^2\right)^{-\frac{1}{p-1}}$$
for $p>\frac{n}{n-2}$
and small enough $c_p>0$.

Concerning the same problem (\ref{GSI}) but involving non-negative potential term
\begin{align}\label{GSp}
	\Delta u+\Phi(x)u^p\leq0,
\end{align}
where $\Phi(x)\geq C|x|^m$ for large enough $|x|$ and some $m>-2$.
Mitidieri and Pohozaev  in \cite{Mitidieri1} showed the critical exponent of (\ref{GSp}) in $\mathbb R^n$ is $\frac{n+m}{n-2}$.
The same critical exponent of (\ref{GSp}) in $\mathbb R^n\setminus \{0\}$ is obtained by
Bidaut-V$\acute{\rm e}$ron \cite{Bidaut1}.

Now let us turn to the biharmonic problem. The non-existence and existence results for 
\begin{align}\label{B-1}
	(-\Delta)^2 u=u^p\quad{\rm in}\;\mathbb R^n,
\end{align}
with $n>4$ are well established, namely,  if
$$1<p<\frac{n+4}{n-4},$$
then (\ref{B-1}) admits no positive solution.
While for $p=\frac{n+4}{n-4}$, all entire positive
solutions to (\ref{B-1}) can be written in the form
$$u(x)=c_n\left(\frac{c}{1+c^2|x-x_0|^2}\right)^{\frac{n-4}{2}}$$
for some suitable constants $c_n, c>0$, see for example, Lin's work \cite{Lin}, and for general polyharmonic problem, see  Wei and Xu's paper \cite{Wei}. The existence results for $p>\frac{n+4}{n-4}$ were  obtained by Gazzola and Grunau \cite{Gazzola},  Guo and Wei \cite{Guo}.
Further results in this respect can be found in \cite{Cowan, Fazly, Fazly1, Hu, Souplet, Wei1} and the references therein.

Let us move our attention to the inequality version of (\ref{B-1}), that is
\begin{align}\label{B-2}
	(-\Delta)^2 u\geq u^p\quad{\rm in}\;\mathbb R^n,
\end{align}
where $n>4$.
Mitidieri in \cite{Mitidieri0} proved that (\ref{B-2}) admits no positive solution satisfying
 $-\Delta u\geq0$ in $\mathbb R^n$, if
\begin{align}\label{ZB-1}
	1<p\leq\frac{n}{n-4}.
\end{align}
Later, the restriction of super-harmonicity of $u$ can be dropped, see \cite{CDM, MP01}.


The critical exponent for problem (\ref{B-2}) in exterior domains  $\mathbb R^n\setminus\overline{B_1}$ was obtained by P$\acute{\rm e}$rez, Meli$\acute{\rm a}$n and Quaas in \cite{Burgos}, where $B_1$ is the unit ball centered at origin point. Actually, they considered a more general $g(u)$ instead of $u^p$
\begin{align}\label{B-3}
	(-\Delta)^2 u= g(u)\quad{\rm in}\;\mathbb R^n\setminus\overline{B_1},
\end{align}
where $g$ is continuous and nondecreasing in $[0,\infty)$ and $n>4$. They proved that (\ref{B-3}) possesses a positive supersolution $u$ verifying 
	\begin{align}\label{Introduction8}
		-\Delta u>0\quad{\rm in}\;\mathbb R^n\setminus\overline{B_1},
	\end{align}
	if and only if 
\begin{align}\label{Introduction8-1}
	\int_0^\delta\frac{g(s)}{s^{\frac{2(n-2)}{n-4}}}ds<\infty
\end{align}
for any $\delta>0$. As a special case $g(u)=u^p$, condition (\ref{Introduction8-1}) reduces to
 $$p>\frac{n}{n-4}.$$
The approach used in \cite{Burgos} is based on maximum principle and the method of sub and supersolutions which transfers the problem (\ref{B-3})
 into a radially symmetric setting.  
 Recently, the problem $(-\Delta)^2 u=u^p$ in $\mathbb R^n\setminus \overline{ B_1}$ with  Dirichlet boundary and Neumann boundary conditions were
 investigated by  Guo and Liu in \cite{Guo1}, where non-existence results and critical exponent $\frac{n+4}{n-4}$ were established.

Lately in \cite{Aghajani},  Aghajani, Cowan and R$\breve{\rm a}$dulescu investigated the following positive  solutions of the biharmonic problem in a domain $\Omega$ of $\mathbb R^n$ (bounded or not)
\begin{align}\label{Introduction9}
	(-\Delta)^2u\geq\Phi(x)f(u)\quad{\rm in}\;\Omega,
\end{align}
where $\Phi$ and  $f$ are given functions that satisfy certain conditions, and $u$ also needs to satisfy
\begin{align}\label{Introduction10}
	-\Delta u>0\quad{\rm in}\;\Omega.
\end{align}
In particular, when $\Omega=\mathbb R^n\setminus\overline{B_1}$ $(n>4)$, $\Phi(x)=|x|^m$ $(m>-4)$ and $f(u)=u^p$,  then (\ref{Introduction9}) has no positive classical solutions verifying (\ref{Introduction10}) provided
$$0< p\leq\frac{n+m}{n-4}.$$

Let us move our attention from Euclidean space to manifolds. Throughout the paper, let $\mu$ be the Riemannian measure on $M$, $d$ be the geodesic distance, and $B(x,r)$  be the geodesic ball centered at $x$ with radius $r$, that is,
$$B(x,r)=\left\{y\in M: d(x,y)<r\right\},$$
and denote
$$V(x,r)=\mu\left(B(x,r)\right).$$

In the paper \cite{Grigor'yan-sun}, Grigor'yan and Sun considered the following scalar elliptic differential inequality
\begin{align}\label{SE}
	\Delta u+u^p\leq0\quad{\rm in}\;M.
\end{align}
They obtained that if, for some $o\in M$,
\begin{align*}\label{V-SE}
	V(o,r)\leq Cr^{\frac{2p}{p-1}}\left(\ln r\right)^{\frac{1}{p-1}}
\end{align*}
holds for all large enough $r$, then the only non-negative solution of (\ref{SE}) is zero. Moreover, the exponents $\frac{2p}{p-1}$ and $\frac{1}{p-1}$
here
are sharp and can not be relaxed. 

%

Later, the exterior problem of (\ref{SE}) on manifolds is also investigated by Grigor'yan and Sun, see \cite{Grigor'yan}.
 They studied the existence and non-existence of classical solutions to
\begin{equation}\label{SE-1}
\Delta u+\Phi(x)u^p\leq0\quad\mbox{in $M\setminus K$},
\end{equation}
 where $K$ is a compact subset of $M$. Let us fix a reference point $o\in K$.
 Assume the existence of  Green function $G(x,y)$ of $\Delta$ on $M$, and the following hypotheses hold  with given positive reals $\alpha,\gamma$ and $R_0$
\begin{enumerate}
	\item[$(v)$] {There exists $R_0$ such that for all $r>R_0$,
		\begin{align}
		V(o,r)\simeq r^\alpha.\nonumber
	\end{align}}
	\item[$(g)$]{ For all $x,y\in M$ with $d(x,y)>R_0$,
		\begin{align}
		G(x,y)\simeq d(x,y)^{-\gamma}.\nonumber
	\end{align}}
	\item[$(\phi)$]{ There exist reals $m>\gamma-\alpha$ such that, for all $x\in M$ with $d(x,o)> R_0$,
		\begin{align}
			\Phi(x)\gtrsim d(x,o)^m.\nonumber
	\end{align}}
\end{enumerate}	
Grigor'yan and Sun obtained that, if $(v), (g), (\phi)$ are satisfied on $M$, and 
$$1<p\leq\frac{\alpha+m}{\gamma},$$
then any non-negative solution of (\ref{SE-1}) is identical zero. Moreover, if $\dim M>2$ and $M$ has bounded geometry (that is, there exists $\varepsilon>0$ such that the geodesic balls $B(x,\varepsilon)$ on $M$ are uniformly quasi-isometric to the Euclidean ball $B_\varepsilon(0)$ in $\mathbb R^n$), then, for any
$$p>\frac{\alpha}{\gamma},$$
with $\alpha>\gamma$, the inequality (\ref{SE-1}) with $\Phi(x)\equiv1$ admits a positive solution on $M$.

If  the underlying operator $\Delta$ in (\ref{SE-1}) is replaced by biharmonic operator $(-\Delta)^2$, there seems no results concerning biharmonic exterior problem on
manifolds in the literature. Motivated by this problem, and also inspried by the ideas used in \cite{Grigor'yan}, we aim to determine a critical exponent to classify all  positive  solutions of  biharmonic problem in exterior domains of Riemannian manifold, and try to fill this gap in this paper.

In the rest of the paper, let $M$ be a connected non-compact  complete Riemannian manifold  with $\dim M>2$ and have bounded geometry. Let $K$ be a compact subset of $M$ such that $M\setminus K$ is connected, which means that $K$ can be allowed to be empty, or a singular point.
First,  we consider Liouville type theorems for classical positive solutions  of the following problem
\begin{equation}\label{1-1}
	\begin{cases}
	(-\Delta)^2u\geq\Phi(x)u^p&{\rm in}\;M\setminus K,\\
	-\Delta u\geq 0&{\rm in}\;M\setminus K,
	\end{cases}
\end{equation}
where $\Delta$ is the Laplace-Beltrami operator on $M$, $\Phi$ is a given positive function, and $p>1$ is a given exponent. If such a solution $u$ exists, then $u$ is a positive super-harmonic function.

Let $G(x, y)$ be the Green function of $\Delta$ on $M$, that is, the smallest positive fundamental solution of $\Delta$. Assume 
 such $G(x, y)$ exists.
Fix a reference point $o\in K$ (when $K$ is empty, $o$ can be any point on
$M$). Let us introduce the following hypotheses:
\begin{enumerate}
	\item[$(V)$] {There exist positive reals $\alpha$ and $R_0$ such that, for all $x\in M$ and $r\geq R_0$,
		\begin{align}
			V(x,r)\simeq r^\alpha.\nonumber
	\end{align}}
	\item[$(G)$]{ There exist reals $\gamma>\alpha/2$ such that, for all $x,y\in M$ with $d(x,y)\geq R_0$,
		\begin{align}
		G(x,y)\simeq d(x,y)^{-\gamma}.\nonumber
	\end{align}}
	\item[$(\Phi)$]{ There exist reals $m>2(\gamma-\alpha)$ such that, for all $x\in M$ with $d(x,o)\geq R_0$,
		\begin{align}
			\Phi(x)\gtrsim d(x,o)^m.\nonumber
	\end{align}}
\end{enumerate}	

Our first main result is the following non-existence theorem.
\begin{theorem}\label{thm1}
	{\rm Assume that the hypotheses $(V)$, $(G)$ and $(\Phi)$ are satisfied on $M$. If
		$$1<p\leq\frac{\alpha+m}{2\gamma-\alpha},$$
		then  (\ref{1-1}) admits no positive solution.	}
\end{theorem}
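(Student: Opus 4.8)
The plan is to combine the super-harmonicity of $u$ with the Green function representation to reduce the biharmonic inequality to a second-order integral inequality, and then run a nonlinear-capacity/test-function argument of the kind used in \cite{Grigor'yan}. Set $v \defeq -\Delta u \geq 0$ in $M\setminus K$; then the first inequality of (\ref{1-1}) reads $-\Delta v \geq \Phi(x) u^p$. Since $v$ is a nonnegative superharmonic function on $M\setminus K$, I would first establish a pointwise lower bound for $v$ in terms of a Green potential of $\Phi u^p$: morally, for $x$ with $d(x,o)\geq R_0$,
\begin{align}
v(x) \gtrsim \int_{M\setminus K} G(x,y)\,\Phi(y)\,u(y)^p\,d\mu(y),\nonumber
\end{align}
up to a harmonic correction term that one controls using that $K$ is compact and $M$ is nonparabolic (Green function exists). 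Similarly, since $-\Delta u = v \geq 0$, a second application of the Green representation gives a pointwise lower bound $u(x) \gtrsim \int G(x,y) v(y)\, d\mu(y)$, and $u$ is bounded below by a positive constant on a fixed annulus by the minimum principle. Iterating these two potential-theoretic bounds converts the problem into: a positive $u$ satisfying $u(x) \gtrsim \iint G(x,y) G(y,z)\Phi(z) u(z)^p\, d\mu(z)\, d\mu(y)$.

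Next I would exploit hypotheses $(V)$, $(G)$, $(\Phi)$ quantitatively. Using $(V)$ and $(G)$, the double Green kernel $\int_M G(x,y)G(y,z)\,d\mu(y)$ behaves, for $d(x,z)$ large, like $d(x,z)^{2\gamma-\alpha}$ when $2\gamma > \alpha$ (this is exactly the exponent appearing in the theorem, and is why the condition $\gamma > \alpha/2$ is imposed) — more precisely it is comparable to $d(x,z)^{-(2\gamma-\alpha)}$ after the volume-weighted convolution, so the kernel $G_2(x,z)$ of $(-\Delta)^{-2}$ satisfies $G_2(x,z)\simeq d(x,z)^{-(2\gamma-\alpha)}$ for $d(x,z)\geq R_0$. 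Plugging in $\Phi(z)\gtrsim d(z,o)^m$, I obtain a closed self-improving inequality for $u$ on exterior annuli. From here the standard route is: test this integral inequality against $u^{p}$-weighted cutoffs supported on dyadic annuli $\{R \leq d(x,o) \leq 2R\}$, use Hölder's inequality to absorb the nonlinear term, and derive a decay estimate of the form $\int_{B(o,R)} \Phi u^p \, d\mu \lesssim R^{-\varepsilon(p)}\big(\int \Phi u^p\big)^{1/p}$ where $\varepsilon(p) > 0$ precisely when $p \leq \frac{\alpha+m}{2\gamma-\alpha}$. Letting $R\to\infty$ forces $\int_M \Phi u^p\, d\mu = 0$, hence $u\equiv 0$, contradicting positivity. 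The borderline case $p = \frac{\alpha+m}{2\gamma-\alpha}$ is handled, as in \cite{Grigor'yan-sun}, by a more careful iteration producing a logarithmic gain rather than a power gain.

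The main obstacle I anticipate is the first step — justifying the Green potential lower bounds for $v$ and then for $u$ on $M\setminus K$ rather than on all of $M$. One must handle the boundary behavior near the compact set $K$ (where $u$ need not vanish), control the harmonic part of the decomposition using nonparabolicity and the growth hypotheses, and ensure the relevant integrals converge so that the representation formulas are valid; the bounded-geometry assumption and hypothesis $(V)$ are what make the local elliptic (Harnack/mean-value) estimates uniform enough to run this. A secondary technical point is computing the asymptotics of the iterated kernel $\int G(x,y)G(y,z)\,d\mu(y)$ sharply enough — splitting the $y$-integral into the regions near $x$, near $z$, and the ``far'' region, each contributing, and checking that the condition $2\gamma>\alpha$ is exactly what keeps the near-diagonal contributions from dominating. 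Once these potential estimates are in hand, the concluding test-function computation is routine and parallels the scalar case treated in the cited work.
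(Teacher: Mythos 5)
Your first half tracks the paper closely: the pointwise lower bound $u\gtrsim\widetilde G(\cdot,o)$ away from $K$ (the paper's Lemma \ref{comparison lemma}, obtained by two applications of the maximum principle in $\Omega\setminus\overline U$ rather than by a Riesz decomposition), the twice-iterated Green representation $u(x)\geq\int\int G_{\overline U^c}(x,z)G_{\overline U^c}(z,y)\Phi(y)u(y)^p\,d\mu(y)\,d\mu(z)$, and the asymptotics $\int_M G(x,y)G(y,z)\,d\mu(y)\simeq d(x,z)^{-(2\gamma-\alpha)}$ under $\gamma>\alpha/2$ (Proposition \ref{ existence of G} together with the annulus computation in Section \ref{section3}) are all present in the paper. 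Where you diverge is the mechanism that produces the contradiction. The paper does \emph{not} use a nonlinear-capacity/test-function argument; its upper bound on $u$ comes from the eigenvalue comparison of Lemma \ref{lem1}, namely $\inf_{\Omega}\bigl((-\Delta)^2u-\lambda_1(\Omega)^2u\bigr)\leq0$, hence $\inf_{\Omega\setminus\overline{U_2}}\Phi^{1/(p-1)}u\leq\lambda_1(\Omega\setminus\overline{U_2})^{2/(p-1)}\lesssim R^{-2(\alpha-\gamma)/(p-1)}$. (Incidentally, \cite{Grigor'yan} also uses this eigenvalue route, not a capacity argument, so that citation does not supply the step you are relying on.)

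The genuine gap is the critical case $p=\frac{\alpha+m}{2\gamma-\alpha}$, which the theorem includes. The self-improvement you sketch gives, for $J_R\defeq\int_{A_R}\Phi u^p\,d\mu$ on an annulus $A_R$, the chain $u\gtrsim R^{-(2\gamma-\alpha)}J_R$ on $A_R$ and hence $J_R\gtrsim R^{\alpha+m-(2\gamma-\alpha)p}J_R^p$, i.e. $J_R^{\,p-1}\lesssim R^{(2\gamma-\alpha)p-\alpha-m}$; against the lower bound $J_R\gtrsim R^{\alpha+m-(2\gamma-\alpha)p}$ coming from $u\gtrsim\widetilde G(\cdot,o)$ this yields a contradiction only for $p$ strictly subcritical, while at the critical exponent both sides are $O(1)$ per annulus (and both $O(\ln R)$ after dyadic summation), so nothing is gained. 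The paper closes the critical case precisely because the eigenvalue bound is a pure power of $R$ while the Green-potential lower bound carries an extra $\ln R$ after summing over $\simeq\ln R$ annuli (see (\ref{thm1-16-})--(\ref{thm1-18})); your route has no such asymmetry, and the appeal to the ``more careful iteration'' of \cite{Grigor'yan-sun} does not transfer, since that refinement lives in a different (whole-manifold, volume-with-logarithm) setting. A secondary obstruction: if instead you meant a genuine Mitidieri--Pohozaev cutoff argument on the differential inequality, you would need pointwise control of $\Delta^2$ of cutoff functions, which hypotheses $(V)$ and $(G)$ do not provide on a general manifold. To complete the proof you should either import the eigenvalue comparison of Lemma \ref{lem1} or exhibit the logarithmic gain explicitly.
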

\begin{remark}
	\rm{
Our proof relies on the following key ingredients:
\begin{enumerate}\rm{
	\item[(i)] {The existence of Green function $\widetilde{G}$ of $(-\Delta)^2$ on $M$ and then the positive solution's a priori estimate with
		$\widetilde{G}$. More precisely, under the hypotheses $(V)$ and $(G)$, we show that, for any $x,y\in M$ and $x\neq y$ (cf. Proposition \ref{ existence of G} ),
		$$	\widetilde{G}(x,y)=\int_{M}G(x,z)G(z,y)d\mu(z)<\infty.$$
		Then by maximum principle, we have, for any precompact neighborhood $U$ of $K$ with smooth boundary (cf. Lemma \ref{comparison lemma} ),
		\begin{align}
			u(x)\gtrsim \widetilde{G}(x,o)\quad{\rm for\;all}\;x\in \overline{U}^c.\nonumber
	\end{align} }
	\item[(ii)]{The representation formula via Green function $G$ of $\Delta$, that is, by applying the representation formula twice, we obtain from (\ref{1-1}) that
		\begin{align}
			u(x)\geq\int_{\overline{U}^c}{G}_{\overline{U}^c}(x,z)\left(\int_{\overline{U}^c}{G}_{\overline{U}^c}(z,y)\Phi(y)u^p(y)d\mu(y)\right)d\mu(z)\quad{\rm for\;all}\;x\in \overline{U}^c,\nonumber
		\end{align}
		where ${G}_{\overline{U}^c}$  is the Green function of  $\Delta$ in $\overline{U}^c$ with Dirichlet boundary condition.}
	\item[(iii)] {The a priori estimate of positive solution via the first Dirichlet eigenvalue $\lambda_1$ of $\Delta$. By Green formula, we derive, for any precompact open set $\Omega\subset M$ (cf. Lemma \ref{lem1}),
		\begin{equation}
			\inf_{\Omega}\left((-\Delta)^2u-\lambda_1(\Omega)^2u\right)\leq0,\nonumber
		\end{equation}
		where $\lambda_1(\Omega)$ is the first Dirichlet eigenvalue of $\Delta$ in $\Omega$. Hence, it follows that
		$$\inf_{x\in\Omega}\Phi(x)^{\frac{1}{p-1}} u(x)\leq\lambda_1\left(\Omega\right)^{\frac{2}{p-1}}.$$}}
\end{enumerate}	
Combining with the aboves, we can complete the proof of Theorem \ref{thm1} by contradiction argument.}
\end{remark}

For special case $\Phi(x)\equiv1$, we have
\begin{corollary}\label{corollary1}
	{\rm
		Assume that conditions $(V)$ and $(G)$ are satisfied on $M$ with $\gamma<\alpha<2\gamma$. If
		$$1<p\leq\frac{\alpha}{2\gamma-\alpha},$$
		then problem
		\begin{align}\label{ieq1}
			(-\Delta)^2u\geq u^p\quad\text{in }M\setminus K,
		\end{align}
		admits no positive solution $u$  verifying
		\begin{align}\label{C1}
			-\Delta u\geq 0\quad\text{in }M\setminus K.
		\end{align}
	}
\end{corollary}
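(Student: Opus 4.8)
The plan is to deduce Corollary~\ref{corollary1} directly from Theorem~\ref{thm1} by checking that the constant potential $\Phi\equiv 1$ fits into the hypotheses of that theorem with the exponent choice $m=0$. Indeed, for every $x\in M$ with $d(x,o)\geq R_0$ one trivially has $\Phi(x)=1=d(x,o)^0$, so hypothesis $(\Phi)$ is satisfied with $m=0$, and this value of $m$ is admissible precisely when $0>2(\gamma-\alpha)$, i.e. when $\gamma<\alpha$, which is exactly the left-hand inequality assumed in the corollary. The right-hand inequality $\alpha<2\gamma$ does two further things: it is equivalent to the constraint $\gamma>\alpha/2$ already built into hypothesis $(G)$, and it guarantees $2\gamma-\alpha>0$, so that the exponent $\frac{\alpha}{2\gamma-\alpha}$ in the statement is positive and well defined; together with $\gamma<\alpha$ it also gives $\frac{\alpha}{2\gamma-\alpha}>1$, so that the range $1<p\leq\frac{\alpha}{2\gamma-\alpha}$ is nonempty.

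With $m=0$ the critical exponent $\frac{\alpha+m}{2\gamma-\alpha}$ of Theorem~\ref{thm1} collapses to $\frac{\alpha}{2\gamma-\alpha}$. Since the system (\ref{ieq1})--(\ref{C1}) is nothing but (\ref{1-1}) with $\Phi\equiv 1$, hypotheses $(V)$, $(G)$, $(\Phi)$ all hold, and applying Theorem~\ref{thm1} in the regime $1<p\leq\frac{\alpha}{2\gamma-\alpha}$ yields that (\ref{ieq1})--(\ref{C1}) admits no positive solution, which is the assertion.

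The only point that really uses the precise assumption $\gamma<\alpha<2\gamma$ is the verification that the borderline exponent $m=0$ is permitted by $(\Phi)$; there is no independent analytic obstacle. All the substantive ingredients — the construction of the biharmonic Green function $\widetilde G(x,y)=\int_M G(x,z)G(z,y)\,d\mu(z)$ and its finiteness under $(V)$ and $(G)$, the a priori lower bound $u\gtrsim\widetilde G(\cdot,o)$ outside a neighbourhood of $K$, the twofold application of the representation formula with ${G}_{\overline U^c}$, and the first Dirichlet eigenvalue estimate — are already established in the proof of Theorem~\ref{thm1} and require no modification. In particular, the excluded case $\alpha=\gamma$ needs no separate treatment, since there the admissible range of $p$ is empty in any case.
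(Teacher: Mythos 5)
Your proposal is correct and coincides with the paper's (implicit) argument: Corollary \ref{corollary1} is exactly Theorem \ref{thm1} specialized to $\Phi\equiv1$, which satisfies $(\Phi)$ with $m=0$ precisely because $\gamma<\alpha$ makes $0>2(\gamma-\alpha)$, while $\alpha<2\gamma$ is the standing requirement $\gamma>\alpha/2$ from $(G)$. Your side remarks on the nonemptiness of the range of $p$ and the degenerate case $\alpha=\gamma$ are accurate but not needed.
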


\begin{example}\label{example1}
	{\rm	Let $M=\mathbb R^n$ $(n>4)$, $K=\emptyset$, or $\{0\}$, or $\overline{B_1}$, $\mu$ be the Lebesgue measure and $d(x,y)=|x-y|$. Then $\Delta$ is the classical Laplacian, and its Green function is given by $$G(x,y)=\frac{c_n}{|x-y|^{n-2}}$$
		with $c_n>0$. It follows that $(V)$ and $(G)$ are satisfied with $\alpha=n$ and $\gamma=n-2$. If, for some $m>-4$,
		$$\Phi(x)\gtrsim (1+|x|)^m,$$
		then by Theorem \ref{thm1}, we know problem
		\begin{equation}
			(-\Delta)^2u\geq\Phi(x)u^p\quad{\rm in}\;\mathbb R^N\setminus K,\nonumber
		\end{equation}
	admits no psotive solution $u$ verifying
		$$-\Delta u\geq0\quad{\rm in}\;\mathbb R^N\setminus K,$$
		provided $$1<p\leq\frac{n+m}{n-4}.$$	
The above is in accordance with the results obtained in \cite{Aghajani, Burgos, Mitidieri0}.
}
\end{example}
\vskip1ex

The second aim of this paper is concerned with the existence of positive
solutions to (\ref{ieq1}). For that, we transfer to consider the following biharmonic equation
\begin{align}\label{thm3-1}
	(-\Delta)^2u=\Psi(x) \left(u^p+l^pF^{ap}\right)\quad\text{in }M,	
\end{align}
where $\Psi, F$ are given positive functions, and $a,l$ are given positive parameters. Clearly, inequality (\ref{ieq1}) is contained in (\ref{thm3-1}) if $\Psi(x)\equiv1$. Hence, in the rest of the paper we concentrate on the existence of positive solutions to  (\ref{thm3-1}) unless otherwise specified.

We introduce  the hypotheses for the functions $\Psi$ and $F$:
\begin{enumerate}
	\item[$(\Psi)$]  {There exist reals $s$ satisfying  $2(\gamma-\alpha)<s\leq0$ such that, for all $x\in M$,
		\begin{align}
			\Psi(x)\simeq\begin{cases}
				R_0^s,&\text{if }d(x,o)\leq R_0,\\ d(x,o)^s,&\text{if }d(x,o)> R_0.
			\end{cases} \nonumber
	\end{align}}
	\item[$(F)$]  {There exist reals $\gamma>\alpha/2$ such that, for all $x\in M$,
		\begin{align}
			F(x)\simeq\begin{cases}
				R_0^{-2\gamma+\alpha},&\text{if }d(x,o)\leq R_0,\\ d(x,o)^{-2\gamma+\alpha},&\text{if }d(x,o)> R_0.
			\end{cases} \nonumber
	\end{align}}
\end{enumerate}	

Our existence result is stated in the following theorem.
\begin{theorem}\label{thm3}{\rm
		Assume that $(V)$, $(G)$, $(\Psi)$ and $(F)$ are satisfied on $M$. If
		$$p>\frac{\alpha+s}{2\gamma-\alpha},$$
		then, for small enough $l$ and $a\in\left(\frac{\alpha+s}{(2\gamma-\alpha)p},1\right)$, 
		 problem (\ref{thm3-1}) admits  a positive solution $u\in C^4(M)$ which satisfies
		\begin{align}\label{thm3-2}
			-\Delta u>0\quad\text{in }M.
		\end{align}
		In particular, $u$  solves
		\begin{align}
			(-\Delta)^2u\geq\Psi(x) u^p\quad\text{in }M.\nonumber
	\end{align}}
\end{theorem}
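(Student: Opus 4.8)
The plan is to produce the solution by monotone iteration applied to the integral equation associated to the Green function $\widetilde G(x,y)=\int_MG(x,z)G(z,y)\,d\mu(z)$ of $(-\Delta)^2$, whose existence and finiteness are supplied by Proposition~\ref{ existence of G}. For $f\geq0$ put
\begin{align}
(Tf)(x)\defeq\int_M\widetilde G(x,y)\,\Psi(y)\big(f(y)^p+l^pF(y)^{ap}\big)\,d\mu(y).\nonumber
\end{align}
A non-negative fixed point $u=Tu$ solves (\ref{thm3-1}): by Fubini's theorem $u(x)=\int_MG(x,z)\,v(z)\,d\mu(z)$ with $v(z)\defeq\int_MG(z,y)\Psi(y)\big(u(y)^p+l^pF(y)^{ap}\big)\,d\mu(y)$, so $-\Delta u=v$ and $(-\Delta)^2u=-\Delta v=\Psi(u^p+l^pF^{ap})$; since $G>0$ and the right-hand side of (\ref{thm3-1}) is strictly positive, $v>0$, which is (\ref{thm3-2}), and discarding the non-negative term $\Psi l^pF^{ap}$ gives the final assertion $(-\Delta)^2u\geq\Psi u^p$. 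As $T$ is monotone on non-negative functions, once a supersolution $w\geq Tw$ is in hand, the sequence $u_0\defeq0$, $u_{k+1}\defeq Tu_k$ is non-decreasing and bounded above by $w$ (indeed $u_1=T0\le Tw\le w$ by monotonicity and the supersolution property, then induction), hence converges pointwise to a fixed point $u$ with $0\le u\le w$; moreover $u\geq u_1=l^p\int_M\widetilde G(\cdot,y)\Psi(y)F(y)^{ap}\,d\mu(y)>0$, so $u$ is positive.

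The core of the argument is the construction of $w$, for which I would take $w\defeq 2\,l\,F^a$. Then $w^p+l^pF^{ap}=(2^p+1)\,l^pF^{ap}$, so $w\geq Tw$ is equivalent to
\begin{align}
(2^p+1)\,l^p\int_M\widetilde G(x,y)\,\Psi(y)\,F(y)^{ap}\,d\mu(y)\ \le\ 2\,l\,F(x)^a\qquad\text{for all }x\in M.\nonumber
\end{align}
The decisive estimate is
\begin{align}
\int_M\widetilde G(x,y)\,\Psi(y)\,F(y)^{ap}\,d\mu(y)\ \le\ A\,F(x)\qquad\text{for all }x\in M,\nonumber
\end{align}
for a constant $A=A(\alpha,\gamma,s,a,p,R_0)$. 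Granting it, and using that $F$ is bounded on $M$ (being $\simeq d(\cdot,o)^{-(2\gamma-\alpha)}$ at infinity with $2\gamma-\alpha>0$, and constant near $o$), the displayed inequality reduces to $(2^p+1)\,l^{p-1}\,A\,(\sup_MF)^{1-a}\le 2$, which holds once $l$ is small, since $p>1$ and $a<1$; this is exactly the smallness of $l$ asserted in the theorem.

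To prove the decisive estimate one uses all four hypotheses together with the chosen range of $a$. From $(V)$, $(G)$ and the bounded geometry of $M$ (which supplies Euclidean-type behaviour of $G$ near the diagonal) one first obtains the bound $\widetilde G(x,y)\simeq d(x,y)^{-(2\gamma-\alpha)}$ for $d(x,y)\ge R_0$, locally integrable near the diagonal; only the upper bound is needed. By $(\Psi)$ and $(F)$ the weight satisfies $\Psi(y)F(y)^{ap}\simeq d(y,o)^{\,s-ap(2\gamma-\alpha)}$ for $d(y,o)>R_0$ and is bounded for $d(y,o)\le R_0$; since $a>\frac{\alpha+s}{(2\gamma-\alpha)p}$, the exponent $\beta\defeq ap(2\gamma-\alpha)-s$ satisfies $\beta>\alpha$, so by $(V)$ this weight is integrable over $M$ and concentrated, in the $L^1$ sense, near $o$. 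Splitting $M$ into $B\big(x,\tfrac12 d(x,o)\big)$, $B\big(o,\tfrac12 d(x,o)\big)$, the intermediate annulus, and the far region $\{d(y,o)\ge 2d(x,o)\}$, and estimating each piece via the pointwise bound on $\widetilde G$ and dyadic-annulus volume estimates from $(V)$, yields a bound by a constant multiple of $\max\{R_0,d(x,o)\}^{-(2\gamma-\alpha)}\simeq F(x)$; for $x$ near $o$ one instead uses local integrability of $\widetilde G(x,\cdot)$ against the bounded weight. Carrying out this dyadic decomposition cleanly, and in particular matching the local and large-scale regimes, is the main technical obstacle. The remaining point — promoting the $L^\infty$ fixed point $u$ to a genuine $C^4(M)$ solution — is routine elliptic bootstrapping: $u$ and $v$ are continuous potentials of bounded data, and Schauder estimates applied successively to $-\Delta v=\Psi(u^p+l^pF^{ap})$ and $-\Delta u=v$ give $u\in C^4$, provided (as we assume) $\Psi$ and $F$ are locally H\"older continuous.
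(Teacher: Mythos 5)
Your overall strategy is sound and genuinely different from the paper's in its fixed-point mechanism. The paper runs the Banach fixed point theorem on the weighted ball $S_l=\{0\le u\le lF^a\}$, which forces it to prove both an invariance estimate and a contraction estimate (Propositions \ref{prop1} and \ref{prop2}); your monotone iteration under the supersolution $w=2lF^a$ needs only the invariance-type bound, so you dispense with the analogue of Proposition \ref{prop2} entirely. A second difference is that you estimate the single kernel $\widetilde G$ directly, whereas the paper never estimates $\widetilde G$ from above: it iterates $G$ twice through an intermediate exponent $b$ with $a+\frac{\alpha-\gamma}{2\gamma-\alpha}<b\le\frac{\gamma}{2\gamma-\alpha}$, proving $\int_M G(\cdot,y)\Psi(y)F(y)^{ap}d\mu(y)\lesssim F^{b}$ and then $\int_M G(\cdot,z)F(z)^{b}d\mu(z)\lesssim F^{a}$. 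Your claimed bound $\int_M\widetilde G(\cdot,y)\Psi F^{ap}d\mu\lesssim F$ is stronger than the paper's $\lesssim F^a$ but is still correct under $\gamma<\alpha<2\gamma$ and $ap(2\gamma-\alpha)-s>\alpha$ (the upper bound $\widetilde G(x,y)\lesssim d(x,y)^{-(2\gamma-\alpha)}$ for $d(x,y)\ge R_0$ does follow from $(V)$, $(G)$ by the same computation as in Proposition \ref{ existence of G}); however, you explicitly defer this "decisive estimate," and it is precisely the computational core of the existence proof, so as written the argument is a correct blueprint rather than a proof.

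The one place where you understate a genuine difficulty is the regularity upgrade. "Routine elliptic bootstrapping" does not apply immediately: Schauder estimates need H\"older-continuous right-hand sides, and after the iteration $u$ is only an $L^\infty$ function, so $w=\Psi(u^p+l^pF^{ap})$ is only bounded. The correct order — and the one the paper follows in Steps 2 and 3 — is first to show that the Green potential of a merely bounded function with the decay $w\lesssim(1+|x|)^{s-(2\gamma-\alpha)ap}$ is locally H\"older continuous; only then does $u$ become H\"older, $w$ become H\"older, and Schauder apply. On a manifold satisfying only $(V)$, $(G)$ and bounded geometry this first step is not a citation to standard potential theory: the paper proves it by splitting the potential into near, intermediate and far zones and invoking the oscillation estimate of \cite[Theorem 8.22]{Gilbarg} for $G(\cdot,y)$ harmonic off the singularity. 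Your proposal should either carry out this H\"older estimate or acknowledge it as a second substantive obstacle alongside the kernel estimate.
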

\vskip1ex

For special case of $\Psi(x)\equiv1$, we derive
\begin{corollary}\label{corollary2}
	{\rm Assume that conditions $(V)$ and $(G)$ are satisfied on $M$ with $\gamma<\alpha<2\gamma$. If $$p>\frac{\alpha}{2\gamma-\alpha},$$
		then (\ref{ieq1}) admits a positive solution $u\in C^4(M)$ which satisfies (\ref{thm3-2}).}
\end{corollary}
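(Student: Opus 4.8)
The plan is to deduce Corollary \ref{corollary2} directly from Theorem \ref{thm3} by checking that the hypotheses $(\Psi)$ and $(F)$ can be arranged under the conditions $(V)$, $(G)$ and $\gamma<\alpha<2\gamma$. First I would set $\Psi(x)\equiv 1$; this corresponds to the choice $s=0$ in hypothesis $(\Psi)$, and the requirement $2(\gamma-\alpha)<s\le 0$ is met precisely because $\alpha>\gamma$ forces $2(\gamma-\alpha)<0=s$, while the interior normalization $R_0^s=1$ is trivially consistent. Thus $(\Psi)$ holds with $s=0$. Next I would observe that hypothesis $(F)$ merely prescribes a profile function $F$ with $F(x)\simeq d(x,o)^{-2\gamma+\alpha}$ for large $d(x,o)$ (and a constant value on the compact core), and such a function exists unconditionally: one may simply define $F(x)\defeq (1+d(x,o))^{-2\gamma+\alpha}$, or any comparable choice, so $(F)$ is automatically satisfiable. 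Since $\gamma>\alpha/2$ is part of $(G)$, the exponent $-2\gamma+\alpha$ appearing in $(F)$ is well-defined.

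With $s=0$, the exponent threshold $\frac{\alpha+s}{2\gamma-\alpha}$ in Theorem \ref{thm3} reduces to $\frac{\alpha}{2\gamma-\alpha}$, which is exactly the hypothesis $p>\frac{\alpha}{2\gamma-\alpha}$ of the corollary; note $2\gamma-\alpha>0$ by $\alpha<2\gamma$, so this quantity is a genuine positive exponent. Applying Theorem \ref{thm3} with this data yields, for $l$ small enough and $a$ chosen in the nonempty interval $\bigl(\frac{\alpha}{(2\gamma-\alpha)p},1\bigr)$ (nonempty precisely because $p>\frac{\alpha}{2\gamma-\alpha}$), a positive solution $u\in C^4(M)$ of
\begin{align}
(-\Delta)^2u=\Psi(x)\bigl(u^p+l^pF^{ap}\bigr)=u^p+l^pF^{ap}\quad\text{in }M,\nonumber
\end{align}
satisfying $-\Delta u>0$ in $M$, i.e. (\ref{thm3-2}). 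Since $l^pF^{ap}\ge 0$, this $u$ satisfies $(-\Delta)^2u\ge u^p$ in $M$, hence a fortiori in $M\setminus K$, and $-\Delta u>0$ in $M\setminus K$; therefore $u$ is a positive solution of (\ref{ieq1}) verifying (\ref{C1}).

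The only point requiring a little care — and the closest thing to an obstacle — is verifying that $\Psi\equiv1$ is genuinely an admissible instance of $(\Psi)$ rather than a degenerate boundary case: one must confirm that the strict lower bound $2(\gamma-\alpha)<s$ is strict (it is, since $\alpha>\gamma$ gives $2(\gamma-\alpha)<0$) and that the upper bound $s\le 0$ is inclusive of $s=0$ (it is). Everything else is a direct substitution. I would therefore keep the proof to a few lines: record the choices $s=0$, $\Psi\equiv1$, and $F(x)\defeq(1+d(x,o))^{\alpha-2\gamma}$; note that $(\Psi)$ and $(F)$ then hold under $\gamma<\alpha<2\gamma$; and invoke Theorem \ref{thm3} to conclude, discarding the nonnegative term $l^pF^{ap}$ at the end to pass from the equation (\ref{thm3-1}) back to the inequality (\ref{ieq1}).
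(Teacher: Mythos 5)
Your proposal is correct and matches the paper's intent exactly: the paper presents Corollary \ref{corollary2} as the immediate specialization of Theorem \ref{thm3} to $\Psi\equiv 1$, i.e. $s=0$, with the condition $\gamma<\alpha<2\gamma$ supplying precisely the admissibility of $s=0$ in $(\Psi)$ and of the profile in $(F)$. Your additional checks (nonemptiness of the interval for $a$, discarding the nonnegative term $l^pF^{ap}$, restricting from $M$ to $M\setminus K$) are the right details and require no further justification.
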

\begin{remark}\rm{
In the proof of Theorem \ref{thm3}, we transfer the existence of positive solution $u\in C^4(M)$ to equation (\ref{thm3-1}) with $-\Delta u>0$  to the existence of positive solution $(u,h)\in C^2(M)\times C^2(M)$ to the following system
\begin{align}
	\begin{cases}
		-\Delta u=h&\text{in }M,\\
		-\Delta h=\Psi \left(u^p+l^pF^{ap}\right)&\text{in }M.
	\end{cases}\nonumber
\end{align}
Our main proof can be divided into three steps:
\begin{enumerate}
	\item[(1)] {First, we use the Banach fixed point theorem to obtain a function $u(x)$ which satisfies  the integral equation
		\begin{align}
			u(x)=\int_MG(x,z)\left(\int_MG(z,y)\Psi(y)\left(u(y)^p+l^p F(y)^{ap}\right)d\mu(y)\right)d\mu(z).\nonumber
	\end{align}}
	\item[(2)] {Then, we prove that the function $h(x)$ defined by
		\begin{align}
			h(x):=\int_MG(x,y)\Psi(y)\left(u(y)^p+l^pF(y)^{ap}\right)d\mu(y)\nonumber
		\end{align}
		is H$\ddot{\rm o}$lder continuous and then belongs to $C^2(M)$, and further satisfies
		\begin{align}
			-\Delta h=\Psi(x)\left(u^p+l^pF^{ap}\right)\quad\text{in }M.\nonumber
	\end{align}}
	\item[(3)] {At last, we show the fixed point $u(x)$ is H$\ddot{\rm o}$lder continuous and then belongs to $C^2(M)$, and hence satisfies
		\begin{align}
			-\Delta u=h(x)\quad\text{in }M.\nonumber
	\end{align}}
\end{enumerate}	}
\end{remark}

As a consequence of Corollary \ref{corollary1} and Corollary \ref{corollary2}, we derive a necessary and sufficient criterion for the existence of positive solutions to the problem (\ref{ieq1}) and (\ref{C1}) in exterior domains.
\begin{corollary}\label{corollary3}
	{\rm
		Assume that conditions $(V)$ and $(G)$ are satisfied on $M$ with $\gamma<\alpha<2\gamma$. Then  problem (\ref{ieq1}) in $M\setminus K$
		admits a positive solution $u$ satisfying (\ref{C1}) if and only if $$p>\frac{\alpha}{2\gamma-\alpha}.$$ }
\end{corollary}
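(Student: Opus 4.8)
The plan is to obtain Corollary~\ref{corollary3} as an immediate combination of the non-existence statement of Corollary~\ref{corollary1} and the existence statement of Corollary~\ref{corollary2}; no new tools are required. I would first record the elementary observation that the standing hypotheses $\gamma<\alpha<2\gamma$ force $0<2\gamma-\alpha<\gamma<\alpha$, so that $\frac{\alpha}{2\gamma-\alpha}$ is well defined and strictly larger than $1$; in particular the range $1<p\leq\frac{\alpha}{2\gamma-\alpha}$ used below is nonempty, and there is no gap between the two regimes $p\leq\frac{\alpha}{2\gamma-\alpha}$ and $p>\frac{\alpha}{2\gamma-\alpha}$.

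For the sufficiency direction, I would assume $p>\frac{\alpha}{2\gamma-\alpha}$ and invoke Corollary~\ref{corollary2} directly: since $(V)$ and $(G)$ hold on $M$ with $\gamma<\alpha<2\gamma$, it produces a positive $u\in C^4(M)$ solving $(-\Delta)^2u\geq u^p$ on all of $M$ with $-\Delta u>0$ in $M$. Restricting $u$ to $M\setminus K$ gives a positive solution of (\ref{ieq1}) there, and since $-\Delta u>0\geq 0$ on $M\setminus K$, condition (\ref{C1}) holds automatically. That settles one implication.

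For the necessity direction, I would argue by contraposition. Suppose (\ref{ieq1}) in $M\setminus K$ admits a positive solution $u$ satisfying (\ref{C1}) with $p\leq\frac{\alpha}{2\gamma-\alpha}$; since $p>1$ is assumed, this means $1<p\leq\frac{\alpha}{2\gamma-\alpha}$. I would then apply Corollary~\ref{corollary1} with $\Phi\equiv1$: because $\alpha>\gamma$ we have $2(\gamma-\alpha)<0$, so hypothesis $(\Phi)$ holds with $m=0$, and then $\frac{\alpha+m}{2\gamma-\alpha}=\frac{\alpha}{2\gamma-\alpha}$. Corollary~\ref{corollary1} therefore asserts that (\ref{ieq1}) has no positive solution verifying (\ref{C1}) --- a contradiction. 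Hence the existence of such a $u$ forces $p>\frac{\alpha}{2\gamma-\alpha}$.

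The two implications together give the claimed equivalence. I do not expect a genuine obstacle here: the only things to verify are the bookkeeping points just indicated --- that $\Phi\equiv1$ (respectively $\Psi\equiv1$ with $s=0$ together with the corresponding $F$) really does satisfy the hypotheses of Corollaries~\ref{corollary1} and~\ref{corollary2}, which hinges on $\alpha>\gamma$, and that a solution on all of $M$ restricts to a solution on the exterior domain $M\setminus K$. All the analytic content has already been carried out in the proofs of those two corollaries.
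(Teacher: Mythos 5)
Your proposal is correct and follows exactly the route the paper takes: the paper derives Corollary~\ref{corollary3} simply as the combination of the non-existence statement of Corollary~\ref{corollary1} (for $1<p\leq\frac{\alpha}{2\gamma-\alpha}$) with the existence statement of Corollary~\ref{corollary2} (for $p>\frac{\alpha}{2\gamma-\alpha}$, restricting the global solution to $M\setminus K$). Your additional bookkeeping (that $\frac{\alpha}{2\gamma-\alpha}>1$ under $\gamma<\alpha<2\gamma$, and that $\Phi\equiv1$ satisfies $(\Phi)$ with $m=0$ since $\alpha>\gamma$) is accurate and only makes explicit what the paper leaves implicit.
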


\vskip1ex
The rest of the paper is organized as follows: In Section \ref{section2}, we present some useful preliminaries; In Section \ref{section3}, we show the proof of Theorem \ref{thm1}, and Section \ref{section4} is devoted to the proof of Theorem \ref{thm3}.

\textbf{Notations.} In the above and below, the letters $C,C',C_0,C_1,c_0,c_1$... denote positive constants whose values are unimportant and may vary at different occurrences. Moreover, $f\lesssim g$ stands for $f\leq cg$ for a constant $c>0$; $f\gtrsim g$ stands for $f\geq cg$ for a constant $c>0$; $f\simeq g$ means both $f\lesssim g$ and $f\gtrsim g$.

\vskip1ex
\section{Preliminaries}\label{section2}
Let $G(x,y)$ be the Green function of $\Delta$ on $M$, namely the smallest positive fundamental solution of $\Delta$ on $M$.
Throughout the paper, we always assume that $G$ exists. For any precompact open domain $\Omega$ with smooth boundary in $M$, let $G_{\Omega}(x,y)$ be the Green function of $\Delta$ on $\Omega$ satisfying the Dirichlet boundary condition.

Let $\widetilde{G}_\Omega(x,y)$ be the Green function of $(-\Delta)^2$ on $\Omega$, with the singularity at $y\in\Omega$ and  satisfying the boundary data
\begin{equation}
	\widetilde{G}_\Omega|_{\partial\Omega}=0,\quad\Delta\widetilde{G}_\Omega|_{\partial\Omega}=0.\nonumber
\end{equation}
In terms of the Green function $G_{\Omega}(x,y)$, it is well-known that  $\widetilde{G}_\Omega(x,y)$ has the following integral representation (cf. \cite{Sario})
\begin{align}\label{B1}
\widetilde{G}_\Omega(x,y)=\int_{\Omega}G_\Omega(x,z)G_\Omega(z,y)d\mu(z),\end{align}
and satisfies
\begin{equation}
	-\Delta\widetilde{G}_\Omega(x,y)=G_\Omega(x,y).\nonumber
\end{equation}
The global biharmonic Green function $\widetilde{G}(x,y)$ on $M$ is defined by
\begin{equation}
	\widetilde{G}(x,y)=\lim_{\Omega\to M}\widetilde G_\Omega(x,y).\nonumber
\end{equation}
Moreover, if $\widetilde{G}(x,y)<\infty$ for all $x\neq y$, we say  $\widetilde{G}(x,y)$ on $M$ exists.
Here the limits means that we exaust $M$ by a sequence of $\Omega$, actually the existence of the limits is independent of exhaustion
sequence, see \cite{Sario}.

Firstly, we show the existence of biharmonic Green function $\widetilde{G}(x,y)$ on $M$ under the hypotheses $(V)$ and $(G)$ (cf. Proposition \ref{ existence of G}).
Without loss of generality, let us take $R_0=1$.
Introduce two functions
\begin{align}\label{define v}
	v(r)=\begin{cases}
		r^n,& r\leq1,\\ r^\alpha,& r\geq1,
	\end{cases}
\end{align}
and
\begin{align}\label{define g}
	g(r)=\begin{cases}
		r^{2-n},& r\leq1,\\ r^{-\gamma},& r\geq1.
	\end{cases}
\end{align}
Since $M$ has bounded geometry, it follows from $(V)$, $(G)$ and \cite[Lemma 7.1]{ Grigor'yan} that, for all $x,y\in M$ and $r>0$,
\begin{align}\label{properties of V and G}
	V(x,r)\simeq v(r),\quad G(x,y)\simeq g\left(d(x,y)\right).
\end{align}
For our convenience, fix the referenced point $o$, we denote
$$|x|:=d(o,x).$$

The following two lemmas will be used several times in the proof of our results.

\begin{lemma}\label{d}
	\rm{\cite[Lemma 7.2]{ Grigor'yan} If $d(x,y)\geq|y|$, then
	$$d(x,y)\simeq|x|+|y|.$$
}
\end{lemma}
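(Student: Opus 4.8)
The final statement in the excerpt is Lemma~\ref{d} (quoted from \cite[Lemma 7.2]{Grigor'yan}): if $d(x,y)\geq|y|$, then $d(x,y)\simeq|x|+|y|$. Here is how I would prove it.

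\medskip

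The plan is to verify the two inequalities $d(x,y)\lesssim|x|+|y|$ and $d(x,y)\gtrsim|x|+|y|$ separately, using only the triangle inequality for the geodesic distance $d$ together with the standing hypothesis $d(x,y)\geq|y|=d(o,y)$. The first inequality is immediate and needs no hypothesis at all: by the triangle inequality, $d(x,y)\leq d(x,o)+d(o,y)=|x|+|y|$, so $d(x,y)\lesssim|x|+|y|$ with constant $1$. The content of the lemma is therefore entirely in the reverse estimate $|x|+|y|\lesssim d(x,y)$, which is where the assumption $d(x,y)\geq|y|$ is used.

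\medskip

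For the reverse estimate, first observe that $|y|\leq d(x,y)$ is exactly the hypothesis, so it remains to control $|x|$. Again by the triangle inequality, $|x|=d(o,x)\leq d(o,y)+d(y,x)=|y|+d(x,y)\leq d(x,y)+d(x,y)=2\,d(x,y)$, where in the last step we used the hypothesis $|y|\leq d(x,y)$. Adding the two bounds $|y|\leq d(x,y)$ and $|x|\leq 2\,d(x,y)$ gives $|x|+|y|\leq 3\,d(x,y)$, i.e. $|x|+|y|\lesssim d(x,y)$. Combining this with the trivial direction yields
\[
\tfrac{1}{3}\bigl(|x|+|y|\bigr)\leq d(x,y)\leq |x|+|y|,
\]
which is precisely $d(x,y)\simeq|x|+|y|$.

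\medskip

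There is essentially no obstacle here: the argument is a two-line application of the triangle inequality, and the only subtlety is keeping track of which inequality requires the hypothesis (the lower bound on $d(x,y)$, not the upper bound). One could equivalently phrase it without introducing explicit constants, but since the paper uses the $\simeq$ notation throughout with unspecified constants, the clean statement is just that the hypothesis $d(x,y)\geq|y|$ forces $|x|\leq 2d(x,y)$, and the comparison follows. Since this is cited verbatim from \cite{Grigor'yan}, one may alternatively just refer to that reference and omit the proof entirely.
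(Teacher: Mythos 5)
Your proof is correct: the upper bound is the plain triangle inequality, and the lower bound follows from the hypothesis $|y|\leq d(x,y)$ together with $|x|\leq|y|+d(x,y)\leq 2\,d(x,y)$, giving $|x|+|y|\leq 3\,d(x,y)$. The paper itself supplies no proof but simply cites \cite[Lemma 7.2]{Grigor'yan}, and your two-line triangle-inequality argument is exactly the standard one for this statement.
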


\begin{lemma}\label{F}
	\rm{ If $f$ is a non-negative monotone decreasing function on $(0,\infty)$. Then, for any $x_0\in M$ and $R>0$, we have
		\begin{align}\label{lemma F-1}
			\int_{B(x_0,R)}f\left(d(x_0,x)\right)d\mu(x)\lesssim\int_0^{R}f(r)v(r)\frac{dr}{r},
		\end{align}
	and
	\begin{align}\label{lemma F-2}
		\int_{M\setminus B(x_0,R)}f\left(d(x_0,x)\right)d\mu(x)\lesssim\int_{\frac{1}{2}R}^\infty f(r)v(r)\frac{dr}{r}.
	\end{align}
	In particular,
	\begin{align}\label{lemma F-3}
		\int_Mf\left(d(x_0,x)\right)d\mu(x)\lesssim \int_0^\infty f(r)v(r)\frac{dr}{r}.
	\end{align}}
\end{lemma}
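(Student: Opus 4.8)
The plan is to reduce each of the three integral estimates to a dyadic decomposition argument, using the volume doubling-type behavior of $v$ that is guaranteed by $(V)$ and \eqref{properties of V and G}. First I would prove \eqref{lemma F-1}. Decompose the ball $B(x_0,R)$ into dyadic annuli $A_k = B(x_0, 2^{-k}R)\setminus B(x_0, 2^{-k-1}R)$ for $k\geq 0$. On each $A_k$ the distance $d(x_0,x)$ is comparable to $2^{-k}R$, so by monotonicity $f(d(x_0,x))\leq f(2^{-k-1}R)$ on $A_k$ (monotone decreasing), and $\mu(A_k)\leq V(x_0,2^{-k}R)\simeq v(2^{-k}R)$ by \eqref{properties of V and G}. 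Hence
\begin{align}
\int_{B(x_0,R)} f(d(x_0,x))\,d\mu(x) \lesssim \sum_{k\geq 0} f(2^{-k-1}R)\, v(2^{-k}R).\nonumber
\end{align}
Since $v(r)=r^n$ or $r^\alpha$ with $n,\alpha>0$, one has $v(2^{-k}R)\simeq v(2^{-k-1}R)$, and for $r\in[2^{-k-1}R, 2^{-k}R]$ both $f(r)\geq f(2^{-k}R)$... — more usefully, $f(r)\geq f(2^{-k}R)$ is the wrong direction, so instead I compare $f(2^{-k-1}R)v(2^{-k}R)$ to $\int_{2^{-k-1}R}^{2^{-k}R} f(r)v(r)\frac{dr}{r}$: on that interval $f(r)\geq f(2^{-k}R)\gtrsim$ — again wrong direction. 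The correct comparison is: on $[2^{-k-1}R, 2^{-k}R]$ we have $f(r)\geq f(2^{-k}R)$, which does not immediately help, so one shifts the index by one and uses $f(2^{-k-1}R)\leq f(r)$ for $r\in[2^{-k-2}R,2^{-k-1}R]$, together with $v(2^{-k}R)\simeq v(2^{-k-1}R)\simeq v(r)$ and $\int_{2^{-k-2}R}^{2^{-k-1}R}\frac{dr}{r}=\ln 2$; this yields $f(2^{-k-1}R)v(2^{-k}R)\lesssim \int_{2^{-k-2}R}^{2^{-k-1}R} f(r)v(r)\frac{dr}{r}$, and summing over $k$ gives the telescoping bound $\lesssim \int_0^{R/2} f(r)v(r)\frac{dr}{r}\leq \int_0^R f(r)v(r)\frac{dr}{r}$, which is \eqref{lemma F-1}.

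For \eqref{lemma F-2} I would argue symmetrically with an outward dyadic decomposition: write $M\setminus B(x_0,R) = \bigcup_{k\geq 0} \big(B(x_0,2^{k+1}R)\setminus B(x_0,2^k R)\big)$, bound $f(d(x_0,x))\leq f(2^k R)$ on the $k$-th shell, bound the measure of the shell by $V(x_0,2^{k+1}R)\simeq v(2^{k+1}R)\simeq v(2^k R)$, and then compare $f(2^k R)v(2^k R)$ with $\int_{2^{k-1}R}^{2^k R} f(r)v(r)\frac{dr}{r}$ using $f(2^k R)\leq f(r)$ for $r\leq 2^k R$ and $v(r)\simeq v(2^k R)$ on that interval. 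Summing over $k\geq 0$ telescopes to $\lesssim \int_{R/2}^\infty f(r)v(r)\frac{dr}{r}$, which is \eqref{lemma F-2}. Finally \eqref{lemma F-3} follows by adding \eqref{lemma F-1} with $R=1$ (say) and \eqref{lemma F-2} with $R=1$, since $\int_M = \int_{B(x_0,1)} + \int_{M\setminus B(x_0,1)} \lesssim \int_0^1 f(r)v(r)\frac{dr}{r} + \int_{1/2}^\infty f(r)v(r)\frac{dr}{r} \lesssim \int_0^\infty f(r)v(r)\frac{dr}{r}$.

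The only genuinely delicate point — and the step I expect to require the most care — is making sure the index shifts in the dyadic comparisons go in the right direction: because $f$ is decreasing while the factor $\frac{dr}{r}$ carries no monotonicity, one must always bound $f$ on a shell by its value at the shell's \emph{inner} radius and then pair that with an integral over the \emph{next} (smaller, for the inner estimate; the previous, for the outer estimate) interval where $f(r)$ dominates that value. The comparability $v(2r)\simeq v(r)$, which holds since $v$ is a fixed power of $r$ on each of the two ranges $r\leq 1$ and $r\geq 1$ with a harmless transition at $r=1$, is what lets these shifts cost only a multiplicative constant; the implied constants also absorb the bounded-geometry constants from \eqref{properties of V and G}. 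Everything else is a routine summation of a dyadic series.
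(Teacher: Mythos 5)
Your proposal is correct and is essentially the paper's own argument: the paper's proof is a one-line sketch ("decompose the integral into a sum over annuli, use $V(x_0,r)\lesssim v(r)$ and the monotonicity of $f$"), and your dyadic decomposition with the index-shifted comparison $f(2^{-k-1}R)\,v(2^{-k}R)\lesssim\int_{2^{-k-2}R}^{2^{-k-1}R}f(r)v(r)\frac{dr}{r}$ (and its outward analogue) is exactly the careful execution of that sketch, with the doubling $v(2r)\simeq v(r)$ correctly justified across the transition at $r=1$.
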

\begin{proof}\rm
By decomposing the integral into a sum of the integrals over the annuli,
and using $V(x_0,r)\lesssim v(r)$ and the monotonicity of $f$, we can finish the proof.
\end{proof}
\begin{proposition}\label{ existence of G}
	{\rm Assume that $(V)$ and $(G)$ are satisfied. Then the biharmonic Green function $\widetilde{G}(x,y)$ exists on $M$ and satisfies
		\begin{align}\label{GGG}
			\widetilde{G}(x,y)=\int_{M}G(x,z)G(z,y)d\mu(z).
		\end{align}
	}
\end{proposition}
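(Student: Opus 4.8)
The plan is to show that the pointwise limit $\widetilde{G}(x,y)=\lim_{\Omega\to M}\widetilde{G}_\Omega(x,y)$ is finite for every $x\neq y$ and coincides with the convolution integral $\int_M G(x,z)G(z,y)\,d\mu(z)$. The starting point is the representation (\ref{B1}), namely $\widetilde{G}_\Omega(x,y)=\int_\Omega G_\Omega(x,z)G_\Omega(z,y)\,d\mu(z)$. Since $G_\Omega(x,z)\uparrow G(x,z)$ as $\Omega\uparrow M$ (the Green functions of an exhaustion increase to the global Green function), the integrands increase monotonically, so by the monotone convergence theorem the limit exists in $[0,\infty]$ and equals $\int_M G(x,z)G(z,y)\,d\mu(z)$. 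Thus the entire content of the proposition reduces to the finiteness estimate
\begin{align}
\int_M G(x,z)G(z,y)\,d\mu(z)<\infty\qquad\text{for all }x\neq y.\nonumber
\end{align}

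To prove finiteness I would fix $x\neq y$, set $R=d(x,y)>0$, and split $M$ into three regions: a neighborhood of $x$, a neighborhood of $y$, and the far region. Near $x$ (say on $B(x,R/2)$) one has $G(z,y)\simeq g(d(z,y))\lesssim g(R/2)$ bounded, so the contribution is controlled by $\int_{B(x,R/2)} g(d(x,z))\,d\mu(z)$, which by Lemma \ref{F} (estimate (\ref{lemma F-1})) is $\lesssim \int_0^{R/2} g(r)v(r)\,\frac{dr}{r}$; since $g(r)v(r)/r\simeq r^{n-3}$ near $0$ and $n=\dim M>2$, this converges. The neighborhood of $y$ is symmetric. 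For the far region $M\setminus(B(x,R/2)\cup B(y,R/2))$ one has $d(x,z)\gtrsim d(x,y)$ and $d(x,z)\gtrsim d(z,y)$-type comparisons; using Lemma \ref{d} one gets $d(x,z)\simeq |x|+|z|$ and $d(z,y)\simeq |y|+|z|$ for $|z|$ large, so both Green functions behave like $|z|^{-\gamma}$ and the product integrand behaves like $|z|^{-2\gamma}$, whence the far-region integral is bounded by $\int_1^\infty r^{-2\gamma}v(r)\,\frac{dr}{r}\simeq\int_1^\infty r^{\alpha-2\gamma-1}\,dr$, which converges precisely because hypothesis $(G)$ assumes $\gamma>\alpha/2$, i.e. $\alpha-2\gamma<0$.

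The main obstacle — and the step demanding the most care — is the far-region estimate, because the naive bound $G(x,z)G(z,y)\lesssim |z|^{-2\gamma}$ is not literally uniform: when $z$ is comparable in distance to both $x$ and $y$ but $|z|$ is only moderate, one must keep track of the lower bounds $d(x,z)\gtrsim d(x,y)$ and interplay of the two centers. The clean way is to further split the far region according to whether $d(x,z)\le d(y,z)$ or the reverse; on $\{d(x,z)\le d(y,z)\}$ one has $d(x,z)\ge R/2\ge \tfrac12 d(x,y)$ is not quite enough, so instead use that on this set $d(y,z)\ge \tfrac12 d(x,y)$ too and $G(x,z)G(z,y)\lesssim g(d(x,z))g(d(y,z))\lesssim g(d(x,z))\,g(\tfrac12 d(x,z))\lesssim g(d(x,z))^2$ up to constants (using monotonicity of $g$ and $d(y,z)\ge \tfrac{1}{2}d(x,z)$ by the triangle inequality applied suitably); then apply Lemma \ref{F} to $\int_M g(d(x,z))^2\,d\mu(z)\lesssim\int_0^\infty g(r)^2 v(r)\,\frac{dr}{r}$, whose convergence at infinity needs $2\gamma>\alpha$ and at the origin needs $2(n-2)+n>0$, both true. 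The symmetric set is handled identically. Once finiteness is in hand, the monotone convergence argument from the first paragraph closes the proof; I would also remark that the independence of the limit from the exhaustion sequence is immediate from the monotone convergence representation, consistent with the reference \cite{Sario}.
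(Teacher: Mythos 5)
Your overall strategy is sound and genuinely different in its decomposition from the paper's. The paper splits $M$ into only two sets, $M_1=\{d(x,z)\ge d(y,z)\}$ and $M_2=\{d(x,z)<d(y,z)\}$, uses Lemma \ref{d} to write $d(x,z)\simeq \rho+d(y,z)$ on $M_1$ (with $\rho=d(x,y)$), and reduces everything to the single radial integral $\int_0^\infty g(\rho+r)g(r)v(r)\frac{dr}{r}$, which simultaneously controls the singularity at one center and the decay at infinity; your three-region split (near $x$, near $y$, far) handles these separately and is equally legitimate. Your monotone-convergence justification of $\widetilde G=\lim_\Omega\widetilde G_\Omega$ via $G_\Omega\uparrow G$ is if anything cleaner than the paper's appeal to dominated convergence.

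There is, however, one concrete error in the far-region step that you must repair. You bound $G(x,z)G(z,y)\lesssim g(d(x,z))^2$ on $\{d(x,z)\le d(y,z)\}$ (fine), but then apply Lemma \ref{F} to $\int_M g(d(x,z))^2\,d\mu(z)\lesssim\int_0^\infty g(r)^2v(r)\frac{dr}{r}$ and assert convergence at the origin under the condition ``$2(n-2)+n>0$''. The correct computation near $r=0$ gives $g(r)^2v(r)\frac{dr}{r}=r^{2(2-n)+n-1}dr=r^{3-n}dr$, which converges only when $n<4$; since the paper assumes only $\dim M>2$ (and the motivating examples have $n>4$), the integral $\int_0^\infty g(r)^2v(r)\frac{dr}{r}$ genuinely diverges in the cases of interest, so the step as written fails. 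The fix is immediate and already built into your own setup: the far region excludes $B(x,R/2)$, so you should invoke estimate (\ref{lemma F-2}) of Lemma \ref{F} rather than (\ref{lemma F-3}), obtaining
\begin{align}
\int_{M\setminus B(x,R/2)}g\left(d(x,z)\right)^2d\mu(z)\lesssim\int_{R/4}^\infty g(r)^2v(r)\frac{dr}{r}<\infty,\nonumber
\end{align}
where convergence at infinity uses only $2\gamma>\alpha$ and the origin never enters. With that one correction (and the symmetric one for the set $\{d(y,z)\le d(x,z)\}$), your proof is complete.
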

\rm \begin{proof}
By (\ref{B1}) and Lebesgue dominated convergence theorem, it suffices to
prove  for all $x,y\in M$ with $x\neq y$, there holds
\begin{align}\label{G<infey}
	\int_{M}G(x,z)G(z,y)d\mu(z)<\infty.
\end{align}
From (\ref{properties of V and G}), we arrive
\begin{align}\label{existence of G1}
\int_{M}G(x,z)G(z,y)d\mu(z)\simeq\int_{M}g\left(d(x,z)\right)g\left(d(y,z)\right)d\mu(z).
\end{align}
We split $M$ into two parts:
$$M_1:=\left\{z\in M:d(x,z)\geq d(y,z)\right\}\quad\text{and}\quad M_2:=\left\{z\in M:d(x,z)< d(y,z)\right\}.$$
For simplicity, we write
$$\rho=d(x,y).$$
Applying Lemma \ref{d}, we obtain
\begin{align}\label{cases d}
	\begin{cases}
		d(x,z)\simeq\rho+d(y,z)&\text{in } M_1,\\
		d(y,z)\simeq\rho+d(x,z)&\text{in } M_2.\\
	\end{cases}
\end{align}
Combining (\ref{existence of G1}) and (\ref{cases d}), and by Lemma \ref{F}, we obtain
\begin{align}
\int_{M}G(x,z)G(z,y)d\mu(z)\simeq&\int_{M_1}g\left(\rho+d(y,z)\right)g\left(d(y,z)\right)d\mu(z)\nonumber\\
&+\int_{M_2}g\left(d(x,z)\right)g\left(\rho+d(x,z)\right)d\mu(z)\nonumber\\\lesssim& \int_0^\infty g(\rho+r)g(r)v(r)\frac{dr}{r}.\nonumber
\end{align}
It remains to verify that
\begin{align}\label{existence of G2}
\int_0^\infty g(\rho+r)g(r)v(r)\frac{dr}{r}<\infty.
\end{align}
Let us consider two cases: $0<\rho<1$ and $\rho\geq1$.

\textbf{Case of $0<\rho<1$}. Using (\ref{define v}) and  (\ref{define g}), we have
\begin{align}\label{existence of G3}
	&\quad\int_0^\infty g(\rho+r)g(r)v(r)\frac{dr}{r}\nonumber\\
	&=\int_0^{1-\rho}(\rho+r)^{2-n}rdr+\int_{1-\rho}^1(\rho+r)^{-\gamma}rdr+\int_1^\infty(\rho+r)^{-\gamma}r^{-\gamma+\alpha-1}dr\nonumber\\
	&\leq\rho^{2-n}\int_0^{1-\rho}rdr+\rho^{-\gamma}\int_{1-\rho}^1rdr+\int_1^\infty r^{-2\gamma+\alpha-1}dr<\infty,
\end{align}
where the last integral in (\ref{existence of G3}) converges due to $\gamma>\alpha/2$. 

\textbf{Case of $\rho\geq1$}. Using (\ref{define v}) and  (\ref{define g}), we have
\begin{align}\label{existence of G4}
	\int_0^\infty g(\rho+r)g(r)v(r)\frac{dr}{r}
	&=\int_0^1(\rho+r)^{-\gamma}rdr+\int_1^\infty(\rho+r)^{-\gamma}r^{-\gamma+\alpha-1}dr\nonumber\\
	&\leq\rho^{-\gamma}\int_0^1rdr+\int_1^\infty r^{-2\gamma+\alpha-1}dr<\infty,
\end{align}
where the last integral in (\ref{existence of G4}) converges due to $\gamma>\alpha/2$.

Hence, we finish the proof of (\ref{existence of G2}). Then
combining with (\ref{G<infey}) and (\ref{existence of G1}), we obtain (\ref{GGG}).
\end{proof}

We also need the following lemmas to prove Theorem \ref{thm1}.

\begin{lemma}\label{comparison lemma}
	\rm{Let $w$ be a non-negative nontrivial function satisfying $(-\Delta)^2w\geq0$ and $-\Delta w\geq0$ in $M\setminus K$. Let $U$ be a precompact neighborhood of $K$ with smooth boundary. For a reference point $o\in K$, there holds
		\begin{align}\label{lem00-1}
			w(x)\gtrsim\widetilde{G}(x,o)\quad{\rm for\;all}\;x\in \overline{U}^c.
		\end{align}}
\end{lemma}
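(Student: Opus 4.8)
The plan is to prove Lemma~\ref{comparison lemma} by a standard exhaustion-plus-maximum-principle argument, comparing $w$ with a suitable multiple of the biharmonic Green function $\widetilde{G}$ on an increasing sequence of precompact domains. First I would fix the precompact neighborhood $U$ of $K$ with smooth boundary, and exhaust $M$ by precompact open sets $\Omega_j$ with smooth boundary such that $\overline{U}\subset\Omega_j\nearrow M$. On each annular region $\Omega_j\setminus\overline{U}$, I would compare $w$ with $c\,\widetilde{G}_{\Omega_j}(\cdot,o)$ for a small constant $c>0$ to be chosen independently of $j$. Note that both functions are biharmonic ($(-\Delta)^2\ge 0$) and superharmonic ($-\Delta\ge 0$) in $\Omega_j\setminus\overline{U}$: for $w$ this is the hypothesis, and for $\widetilde{G}_{\Omega_j}(\cdot,o)$ this follows from $-\Delta\widetilde{G}_{\Omega_j}(\cdot,o)=G_{\Omega_j}(\cdot,o)\ge 0$ (which is itself superharmonic away from $o$, hence $\widetilde G_{\Omega_j}$ is biharmonic there) together with the fact that $o\in K\subset\overline U$ lies outside the region where we compare.

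The key step is the choice of the constant $c$ on the inner boundary. Since $w$ is non-negative, nontrivial and superharmonic on $M\setminus K$, the strong minimum principle gives $w>0$ on $(M\setminus K)$; in particular $w\ge \delta_1>0$ on the compact set $\partial U$. On the other hand $\widetilde{G}_{\Omega_j}(\cdot,o)$ is bounded on $\partial U$ uniformly in $j$: indeed $\widetilde G_{\Omega_j}(x,o)\le \widetilde G(x,o)=\int_M G(x,z)G(z,o)\,d\mu(z)$, which by Proposition~\ref{ existence of G} is finite and, being continuous in $x\neq o$, is bounded by some $\delta_2<\infty$ on $\partial U$. Choose $c=\delta_1/\delta_2$. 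Then on $\partial U$ we have $w\ge c\,\widetilde G_{\Omega_j}(\cdot,o)$, and on $\partial\Omega_j$ we have $\widetilde G_{\Omega_j}(\cdot,o)=0\le w$ (here using $w\ge 0$) and also $\Delta\widetilde G_{\Omega_j}(\cdot,o)=-G_{\Omega_j}(\cdot,o)=0$ on $\partial\Omega_j$.

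Next I would apply the maximum principle for the biharmonic operator in the form appropriate to these boundary conditions. Set $\psi_j=w-c\,\widetilde G_{\Omega_j}(\cdot,o)$ on $\overline{\Omega_j\setminus\overline U}$. Then $-\Delta\psi_j = -\Delta w - c\,G_{\Omega_j}(\cdot,o)$. Rather than control $-\Delta\psi_j$ directly (which need not have a sign), the cleaner route is the two-step comparison via $G_{\Omega_j}$: let $h_j=-\Delta w\ge 0$ in $\Omega_j\setminus\overline U$; since $h_j\ge 0$ and $h_j$ is superharmonic there ($-\Delta h_j=(-\Delta)^2 w\ge 0$), compare $h_j$ with a small multiple of $G_{\Omega_j}(\cdot,o)$ using that $h_j>0$ on $\partial U$ (again strong minimum principle, as $h_j$ is superharmonic and nontrivial) and $G_{\Omega_j}(\cdot,o)=0$ on $\partial\Omega_j$, to get $-\Delta w = h_j \gtrsim G_{\Omega_j}(\cdot,o)$ in $\Omega_j\setminus\overline U$ with constant uniform in $j$. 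Then $-\Delta\big(w - c'\widetilde G_{\Omega_j}(\cdot,o)\big) = h_j - c'G_{\Omega_j}(\cdot,o)\ge 0$, and on the boundary $w-c'\widetilde G_{\Omega_j}(\cdot,o)\ge 0$ (on $\partial U$ by adjusting the constant, on $\partial\Omega_j$ since $\widetilde G_{\Omega_j}=0$), so by the ordinary minimum principle for $-\Delta$ we get $w\ge c'\widetilde G_{\Omega_j}(\cdot,o)$ throughout $\Omega_j\setminus\overline U$. Finally, letting $j\to\infty$ and using $\widetilde G_{\Omega_j}(x,o)\nearrow\widetilde G(x,o)$ (definition of the global biharmonic Green function, together with Proposition~\ref{ existence of G}), we obtain $w(x)\gtrsim\widetilde G(x,o)$ for all $x\in\overline U^c$.

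The main obstacle is ensuring that every comparison constant can be chosen \emph{uniformly in the exhaustion index $j$}; this is exactly where the finiteness $\widetilde G(x,o)<\infty$ from Proposition~\ref{ existence of G} and the monotone convergence $\widetilde G_{\Omega_j}\nearrow\widetilde G$ are essential, since they give a $j$-independent upper bound for $\widetilde G_{\Omega_j}(\cdot,o)$ on $\partial U$. A secondary point requiring care is the strict positivity of $w$ and of $-\Delta w$ on $\partial U$: one must invoke that $w$ is nontrivial and superharmonic (hence, by the strong minimum principle on the connected set $M\setminus K$, strictly positive), and that $-\Delta w$, being non-negative, superharmonic and — unless $w$ is harmonic, a case handled separately or absorbed — nontrivial, is likewise strictly positive; the degenerate case $-\Delta w\equiv 0$ would force $w$ harmonic and bounded below by a positive constant, which still dominates the bounded $\widetilde G(\cdot,o)$ near $\partial U$ and the argument goes through a fortiori.
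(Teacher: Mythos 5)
Your argument is correct and is essentially the paper's own proof: the same two-step maximum-principle comparison on an exhaustion $\Omega_j\nearrow M$ (first $-\Delta w$ against a multiple of $G_{\Omega_j}(\cdot,o)$, then $w$ against the corresponding multiple of $\widetilde G_{\Omega_j}(\cdot,o)$), with the comparison constant made uniform in $j$ via $\widetilde G_{\Omega_j}\leq\widetilde G<\infty$ on $\partial U$, which is exactly what Proposition \ref{ existence of G} supplies. The one place your reasoning (and, implicitly, the paper's) goes wrong is the degenerate case $-\Delta w\equiv 0$: a positive harmonic function on a non-compact end is not in general bounded below by a positive constant (e.g.\ $w(x)=|x|^{2-n}$ on $\mathbb R^n\setminus\overline{B_1}$), and in that example $w\simeq|x|^{2-n}$ fails to dominate $\widetilde G(x,0)\simeq|x|^{4-n}$, so the conclusion of the lemma is simply unavailable for harmonic $w$ and no a fortiori argument can rescue it. The paper sidesteps this by asserting that $-\Delta w$ is strictly positive via the strong maximum principle, which tacitly assumes $-\Delta w\not\equiv 0$; this is harmless in the application to Theorem \ref{thm1}, where $(-\Delta)^2u\geq\Phi u^p>0$ forces $-\Delta u$ to be nontrivial, but your proposed fix for the harmonic case should be dropped rather than repaired.
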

\rm \begin{proof}
	Since $w$ and $-\Delta w$ are super-harmonic outside $K$, by strong maximum principle, we obtain $w$ and $-\Delta w$ are strictly positive.
	Hence, there exists a small enough positive constant $\rho$ such that  $w|_{\partial U}\geq\rho$, and $-\Delta w|_{\partial U}\geq\rho$.
	
	Fix $o\in K$. For any precompact open set $\Omega$ with smooth boundary such that $U\subset\subset\Omega$, denote
	 $$\theta=\frac{1}{\sup\limits_{x\in\partial U}\widetilde{G}_\Omega(x,o)+\sup\limits_{x\in\partial U}{G}_\Omega(x,o)}.$$
	Since
	$$
	\begin{cases}
		(-\Delta)^2 w\geq0&{\rm in}\;\Omega\setminus \overline{U},\nonumber\\
		-\Delta w\geq0&{\rm on}\;\partial\Omega,\nonumber\\
		-\Delta w\geq\rho&{\rm on}\;\partial U
		,\nonumber
	\end{cases}
	$$
	and
	$$
	\begin{cases}
		(-\Delta)^2 \left(\theta\rho\widetilde{G}_\Omega(x,o)\right)=0&{\rm in}\;\Omega\setminus \overline{U},\nonumber\\
		-\Delta \left(\theta\rho\widetilde{G}_\Omega(x,o)\right)=\theta\rho{G}_\Omega(x,o)=0&{\rm on}\;\partial\Omega,\nonumber\\
		-\Delta \left(\theta\rho\widetilde{G}_\Omega(x,o)\right)=\theta\rho{G}_\Omega(x,o)\leq\rho&{\rm on}\;\partial U.
		\nonumber
	\end{cases}
	$$
	By maximum principle, we obtain
	$$-\Delta w\geq-\Delta \left(\theta\rho\widetilde{G}_\Omega(x,o)\right)\quad{\rm in}\;\Omega\setminus \overline{U}.$$
Since
	$$
	\begin{cases}
		w\geq 0= \theta\rho\widetilde{G}_\Omega(x,o)\quad{\rm on}\;\partial\Omega,\nonumber\\
		w\geq\rho\geq\theta\rho\widetilde{G}_\Omega(x,o)\quad{\rm on}\;\partial U,
		\nonumber
	\end{cases}
	$$
  and using  maximum principle again, we obtain
	$$ w\geq \theta\rho\widetilde{G}_\Omega(x,o)\quad{\rm in}\;\Omega\setminus \overline{U}.$$
	Hence by exhausting $M$ with a sequence of $\Omega$, we obtain (\ref{lem00-1}).
\end{proof}

\begin{lemma}\label{lem1}
	\rm{ Let $\Omega$ be a precompact open subset of $M$. Let $\lambda_1(\Omega)$ be the first Dirichlet eigenvalue for $-\Delta$ in $\Omega$. For any non-negative function $f\in C^4(\Omega)\cap C^2(\overline{\Omega})$ satisfies $-\Delta f|_{\partial\Omega}\geq0$, we have
		\begin{equation}\label{2-1}
			\inf_{\Omega}\left((-\Delta)^2f-\lambda_1(\Omega)^2f\right)\leq0.
		\end{equation}				
	}
\end{lemma}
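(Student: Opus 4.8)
The plan is to argue by contradiction using the variational characterization of the first Dirichlet eigenvalue. Suppose, contrary to \eqref{2-1}, that
$$(-\Delta)^2 f - \lambda_1(\Omega)^2 f > 0 \quad \text{everywhere in } \Omega.$$
Let $\varphi_1 > 0$ be the first Dirichlet eigenfunction of $-\Delta$ in $\Omega$, normalized so that $\int_\Omega \varphi_1^2\, d\mu = 1$, so that $-\Delta \varphi_1 = \lambda_1(\Omega) \varphi_1$ in $\Omega$ and $\varphi_1|_{\partial\Omega} = 0$. Testing the strict inequality against $\varphi_1 \geq 0$ and integrating over $\Omega$ gives
$$\int_\Omega \big((-\Delta)^2 f\big)\varphi_1\, d\mu > \lambda_1(\Omega)^2 \int_\Omega f \varphi_1\, d\mu.$$

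The heart of the argument is to integrate the left-hand side by parts twice and show it equals $\lambda_1(\Omega)^2 \int_\Omega f\varphi_1\,d\mu$, producing the contradiction. First I would write, using Green's formula on $\Omega$,
$$\int_\Omega \big((-\Delta)^2 f\big)\varphi_1\, d\mu = \int_\Omega (-\Delta f)(-\Delta \varphi_1)\, d\mu + \int_{\partial\Omega}\Big( (-\Delta f)\,\partial_\nu \varphi_1 - \varphi_1\,\partial_\nu(-\Delta f)\Big) d\sigma.$$
Since $\varphi_1|_{\partial\Omega} = 0$, the second boundary term vanishes; since $-\Delta f|_{\partial\Omega} \geq 0$ and $\partial_\nu \varphi_1 \leq 0$ on $\partial\Omega$ (the outward normal derivative of a positive function vanishing on the boundary, by Hopf's lemma), the first boundary term is $\leq 0$. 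Hence
$$\int_\Omega \big((-\Delta)^2 f\big)\varphi_1\, d\mu \leq \int_\Omega (-\Delta f)(-\Delta \varphi_1)\, d\mu = \lambda_1(\Omega)\int_\Omega (-\Delta f)\,\varphi_1\, d\mu.$$
Applying Green's formula once more to $\int_\Omega (-\Delta f)\varphi_1\, d\mu$, and again using $\varphi_1|_{\partial\Omega} = 0$ together with $f \geq 0$, $\partial_\nu\varphi_1 \leq 0$, one gets
$$\int_\Omega (-\Delta f)\,\varphi_1\, d\mu = \int_\Omega f(-\Delta\varphi_1)\, d\mu + \int_{\partial\Omega} f\,\partial_\nu\varphi_1\, d\sigma \leq \lambda_1(\Omega)\int_\Omega f\varphi_1\, d\mu.$$
Chaining these two estimates yields $\int_\Omega \big((-\Delta)^2 f\big)\varphi_1\, d\mu \leq \lambda_1(\Omega)^2 \int_\Omega f\varphi_1\, d\mu$, which contradicts the strict inequality above (note $\int_\Omega f\varphi_1\,d\mu > 0$ unless $f \equiv 0$, in which case \eqref{2-1} holds trivially). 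This completes the proof.

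The main obstacle I anticipate is the careful handling of the boundary terms: one must justify that $\partial_\nu \varphi_1 \leq 0$ on $\partial\Omega$ (Hopf's lemma, valid since $\partial\Omega$ is smooth and $\varphi_1 > 0$ in $\Omega$) and that all boundary integrals are well-defined, which relies on the regularity assumption $f \in C^4(\Omega)\cap C^2(\overline{\Omega})$ and the smoothness of $\partial\Omega$. The sign condition $-\Delta f|_{\partial\Omega}\geq 0$ is exactly what is needed to make the first boundary term have the favorable sign; without it the argument fails, which explains why it appears in the hypotheses. The rest is routine integration by parts.
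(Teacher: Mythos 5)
Your proof is correct and follows essentially the same route as the paper: test against the positive first Dirichlet eigenfunction, apply Green's formula twice, and use $\varphi_1|_{\partial\Omega}=0$, $\partial_\nu\varphi_1\leq 0$, $-\Delta f|_{\partial\Omega}\geq 0$ and $f\geq 0$ to discard the boundary terms with the right sign; the paper merely phrases it directly (showing $\int_\Omega[(-\Delta)^2f-\lambda_1^2f]v\,d\mu\leq 0$ and dividing by $v>0$) rather than by contradiction. The only detail you omit is that $\Omega$ is not assumed to have smooth boundary, which the paper handles by approximating $\Omega$ from inside by smooth domains with $\lambda_1(\Omega_n)\to\lambda_1(\Omega)$.
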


\rm \begin{proof}
We can always assume that $\Omega$ has smooth boundary, otherwise we can use approximation of $\Omega$ from inside by an increasing
	 sequence $\{\Omega_n\}_{n=1}^\infty$ of domains with smooth boundaries, and $\lambda_1(\Omega_n)\to\lambda_1(\Omega)$.
	
	  Let $v$ be the Dirichlet eigenfunction of $-\Delta$ in $\Omega$ with the corresponding eigenvalue $\lambda_1=\lambda_1(\Omega)$. Since $v$ does not change sign in any connected component of $\Omega$,  let us assume that $v>0$ in $\Omega$. Using the fact
	$$
	\begin{cases}
		\Delta v+\lambda_1 v=0&{\rm in}\;\Omega,\nonumber\\
		v=0&{\rm on}\;\partial\Omega
		,\nonumber
	\end{cases}
	$$
and
$$
	\begin{cases}
		(-\Delta)^2 v-\lambda_1^2 v=0&{\rm in}\;\Omega,\nonumber\\
		v=\Delta v=0&{\rm on}\;\partial\Omega
		.\nonumber
	\end{cases}
$$

	Applying Green formula twice, we obtain
	\begin{align}\label{2-2}
		\int_\Omega (-\Delta)^2f vd\mu&=\int_\Omega \Delta v\Delta f d\mu+\int_{\partial\Omega}\left(\frac{\partial\Delta f}{\partial\nu}v-\frac{\partial v}{\partial\nu}\Delta f\right)dS\nonumber\\
		&=\int_\Omega (-\Delta)^2vfd\mu+\int_{\partial\Omega}\left(\frac{\partial f}{\partial\nu}\Delta v-\frac{\partial\Delta v}{\partial\nu} f\right)dS-\int_{\partial\Omega}\frac{\partial v}{\partial\nu}\Delta fdS\nonumber\\
		&=\int_\Omega (-\Delta)^2vfd\mu-\int_{\partial\Omega}\left(\frac{\partial v}{\partial\nu}\Delta f+\frac{\partial\Delta v}{\partial\nu} f\right)dS,
	\end{align}
	where $\nu$ is the outward normal unit vector field on $\partial\Omega$ and $S$ is the surface measure on $\partial\Omega$.
	
	Noting $(-\Delta)^2 v-\lambda_1^2 v=0$, and combining with (\ref{2-2}), we arrive
	\begin{align}
		\int_\Omega (-\Delta)^2fv-\lambda_1^2fvd\mu
		=-\int_{\partial\Omega}\left(\frac{\partial v}{\partial\nu}\Delta f+\frac{\partial\Delta v}{\partial\nu} f\right)dS.\nonumber
	\end{align}
	Since $\frac{\partial v}{\partial\nu}|_{\partial\Omega}\leq0$, $\frac{\partial \Delta v}{\partial\nu}|_{\partial\Omega}\geq0$ and $\Delta f|_{\partial\Omega}\leq0$, it follows that
	\begin{align}
		\int_\Omega \left[(-\Delta)^2f-\lambda_1^2f\right]vd\mu\leq0,\nonumber
	\end{align}
	whence the claim (\ref{2-1}) follows due to $v>0$ in $\Omega$.
\end{proof}

It is not difficult to observe that the condition $(V)$ implies a weaker condition $(V_{\geq})$ as below:
		\begin{enumerate}
			\item[$(V_{\geq})$] {There exist $\tau\in(0,1)$ and $c>0$, such that for all large enough $r$,
				\begin{align}\label{hypotheses-4}
					V(o,r)-V(o,\tau r)\geq cr^\alpha.
			\end{align}}
		\end{enumerate}
	
\begin{lemma}\label{lem2}
	\rm{ Assume $(G)$ is satisfied on $M$.  Let $U$ be a precompact open subset of $M$ with smooth boundary. Let $R$ be large enough and set
\begin{align}
	\begin{cases}
	\Omega=B(o,N^2R), &\Omega_1=B(o,2N^2R),\\
     U_0=B(o,\tau R), &U_1=B(o,r),
	\end{cases}\nonumber
\end{align}
	where $N>2$ is a large enough constant depending on the constants arising from $(G)$,  $\tau\in(0,1)$ from (\ref{hypotheses-4}), $R>\tau^{-1}r$, and $r>2 \text{ diam} U$ such that $$U\subset\subset U_1\subset\subset U_0\subset\subset\Omega\subset\subset\Omega_1.$$
	Then, for all
	$$x\in\Omega\setminus \overline{U_0},\quad y\in\Omega_1\setminus \overline{U_1},$$
	there hold
	$$G(x,y)\gtrsim R^{-\gamma},\quad\text{and}\quad G_{\overline{U}^c}(x,y)\gtrsim R^{-\gamma}.$$
}
\end{lemma}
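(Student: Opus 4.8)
The plan is to establish the two lower bounds separately: the one for $G$ follows at once from $(G)$ and the triangle inequality, and the one for the exterior Green function $G_{\overline{U}^c}$ is obtained by comparing it with $G$ via the maximum principle. Throughout write $|\cdot|=d(o,\cdot)$. Since $o\in K\subset U$ and $U$ is precompact, $\overline{U}\subset\overline{B(o,D_0)}$ with $D_0:=\text{diam}\,U$, and as $U_0,U_1,\Omega,\Omega_1$ are all geodesic balls about $o$ the hypotheses give $\tau R\le|x|\le N^2R$ and $r\le|y|\le 2N^2R$, while $R>\tau^{-1}r$ forces $\tau R>r$, hence $\min(|x|,|y|)\ge r$. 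I may assume $x\neq y$ (otherwise both Green functions are $+\infty$ at $(x,y)$), that $R$ is large enough that $\tau R\ge1$, and — enlarging $r$ if necessary, which the statement permits since $r$ is only required to exceed $2\,\text{diam}\,U$ (with $R$ then taken correspondingly large) — that $r\ge 2$. By (\ref{properties of V and G}) one has $G(\cdot,\cdot)\simeq g(d(\cdot,\cdot))$, with $g$ the function (\ref{define g}), which is continuous and strictly decreasing on $(0,\infty)$. Since $d(x,y)\le|x|+|y|\le 3N^2R$, monotonicity of $g$ gives $G(x,y)\simeq g(d(x,y))\ge g(3N^2R)=(3N^2R)^{-\gamma}\gtrsim R^{-\gamma}$, the first asserted bound.

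For the second bound I would use the identity $G_{\overline{U}^c}(x,y)=G(x,y)-u_y(x)$, where $u_y:=G(\cdot,y)-G_{\overline{U}^c}(\cdot,y)\ge0$ is the pointwise limit, as $L\to\infty$, of the harmonic functions $h^L_y$ on $B(o,L)\setminus\overline{U}$ equal to $G_{B(o,L)}(\cdot,y)$ on $\partial U$ and to $0$ on $\partial B(o,L)$. Set $c_U:=\inf_{\partial U}G(\cdot,o)>0$. The function $w:=c_U^{-1}\big(\sup_{\partial U}G(\cdot,y)\big)\,G(\cdot,o)$ is harmonic and nonnegative on $\overline{U}^c$ (here $o\in U$ is used), on $\partial U$ dominates $\sup_{\partial U}G(\cdot,y)\ge G_{B(o,L)}(\cdot,y)=h^L_y$, and on $\partial B(o,L)$ dominates $0=h^L_y$; so the maximum principle on $B(o,L)\setminus\overline{U}$ yields $h^L_y\le w$, and letting $L\to\infty$, $u_y(x)\le c_U^{-1}\big(\sup_{\partial U}G(\cdot,y)\big)\,G(x,o)$. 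Now for $z\in\partial U$ one has $|y|/2\le|y|-D_0\le d(z,y)\le|y|+D_0\le\tfrac32|y|$ (using $|y|\ge r>2D_0$ and $|y|\ge2$), so $G(z,y)\simeq g(d(z,y))\simeq|y|^{-\gamma}$; likewise $G(x,o)\simeq|x|^{-\gamma}$. Hence $u_y(x)\lesssim|x|^{-\gamma}|y|^{-\gamma}$, with constant depending only on $U$ and the constants in $(G)$ and the bounded geometry.

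To conclude: if $d(x,y)\ge1$ then $G(x,y)\simeq d(x,y)^{-\gamma}$ and, using $d(x,y)\le|x|+|y|$, one gets $u_y(x)/G(x,y)\lesssim\big(d(x,y)/(|x|\,|y|)\big)^\gamma\le\big(2/\min(|x|,|y|)\big)^\gamma\le(2/r)^\gamma$; taking $r$ large enough, depending only on $U$ and the structure constants (which, as noted, is permitted), forces this ratio below $\tfrac12$, whence $G_{\overline{U}^c}(x,y)\ge\tfrac12G(x,y)\gtrsim d(x,y)^{-\gamma}\ge(3N^2R)^{-\gamma}\gtrsim R^{-\gamma}$. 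If instead $d(x,y)<1$ then $G(x,y)\gtrsim g(d(x,y))\ge g(1)=1$ whereas $u_y(x)\lesssim(\tau R)^{-\gamma}r^{-\gamma}\to0$, so again $G_{\overline{U}^c}(x,y)\ge\tfrac12G(x,y)\gtrsim R^{-\gamma}$ once $R$ is large. I expect the only genuinely delicate point to be the identity $G_{\overline{U}^c}=G-u_y$ together with the maximum-principle comparison on the non-compact domain $\overline{U}^c$ — i.e. controlling the behaviour at infinity; this is standard since $M$ is non-parabolic ($G$ exists), and it is handled cleanly through the exhaustion $B(o,L)\setminus\overline{U}$, $L\to\infty$. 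Everything else is routine bookkeeping with $G(\cdot,\cdot)\simeq g(d(\cdot,\cdot))$.
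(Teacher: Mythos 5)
Your argument is correct and is essentially the argument the paper points to: the paper gives no proof of its own here, only a citation to (5.7)--(5.14) of Grigor'yan--Sun, and that argument is exactly your decomposition $G_{\overline{U}^c}(\cdot,y)=G(\cdot,y)-u_y$ with the harmonic correction $u_y$ dominated, via the maximum principle on the exhaustion $B(o,L)\setminus\overline{U}$, by a multiple of the barrier $G(\cdot,o)$, giving $u_y(x)\lesssim |x|^{-\gamma}|y|^{-\gamma}$ and hence $u_y(x)\leq\tfrac12G(x,y)$. The one point to flag is that you need $r$ large depending on $U$, $N$, $\tau$ and the comparability constants, not merely $r>2\,\mathrm{diam}\,U$ as literally stated; this is harmless (the implied constants are anyway allowed to depend on $r$, the cited proof has the same feature, and in the proof of Theorem \ref{thm1} the authors are free to fix $r$ large before sending $R\to\infty$), but it is a genuine strengthening of the stated hypothesis and you are right to make it explicit. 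Likewise your standing assumptions $o\in U$ and the use of (\ref{properties of V and G}) (which needs $(V)$ and bounded geometry, not just $(G)$) match the paper's actual setting rather than the lemma's literal wording.
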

\rm \begin{proof}
The proof is referred to part of the proof taken from (5.7) to (5.14) of \cite[Theorem 2.1]{ Grigor'yan}.
\end{proof}

\vskip2ex


\section{Non-existence of positive solutions}\label{section3}

\rm \begin{proof}[\rm\textbf{Proof of Theorem \ref{thm1}}]
Assume $u$  is a positive solution of (\ref{1-1}) in $M\setminus K$. Let $U$ be a fixed precompact neighborhood of $K$ with smooth boundary. Fix $o\in K$. By Lemma \ref{comparison lemma}, we obtain
\begin{align}\label{thm1-2}
	u(x)\gtrsim\widetilde{G}(x,o)\quad{\rm for\;all}\;x\in \overline{U}^c.
\end{align}
Since $(-\Delta)^2u\geq\Phi(x)u^p$ and $-\Delta u\geq0$ on $\overline{U}^c$, we have
\begin{align}
	-\Delta u(x)\geq\int_{\overline{U}^c}{G}_{\overline{U}^c}(x,y)\Phi(y)u^p(y)d\mu(y).\nonumber
\end{align}
Moreover,
\begin{align}\label{thm1-3}
	u(x)\geq\int_{\overline{U}^c}{G}_{\overline{U}^c}(x,z)\left(\int_{\overline{U}^c}{G}_{\overline{U}^c}(z,y)\Phi(y)u^p(y)d\mu(y)\right)d\mu(z).
\end{align}
Substituting (\ref{thm1-2}) into (\ref{thm1-3}), we obtain
\begin{align}\label{thm1-4}
	u(x)\gtrsim \int_{\overline{U}^c}{G}_{\overline{U}^c}(x,z)\left(\int_{\overline{U}^c}{G}_{\overline{U}^c}(z,y)\Phi(y)\widetilde{G}^p(y,o)d\mu(y)\right)d\mu(z)\quad{\rm for\;all}\;x\in \overline{U}^c.
\end{align}

Let $U_2$ and $\Omega$ be two precompact open sets  with smooth boundaries  and satisfy
\begin{align}\label{thm1-5}
	U\subset\subset U_2\subset\subset\Omega.
\end{align}
By Lemma \ref{lem1}, we get
$$\inf_{x\in\Omega\setminus \overline{U_2}}\left((-\Delta)^2u-\lambda_1\left(\Omega\setminus\overline{U_2}\right)^2u\right)\leq0.$$
Since $(-\Delta)^2u\geq\Phi u^p$, we obtain
$$\inf_{x\in\Omega\setminus \overline{U_2}}\left(\Phi u^p-\lambda_1\left(\Omega\setminus\overline{U_2}\right)^2u\right)\leq0.$$
It follows that
\begin{align}\label{A1}
	\inf_{x\in\Omega\setminus \overline{U_2}}\Phi(x)^{\frac{1}{p-1}} u(x)\leq\lambda_1\left(\Omega\setminus\overline{U_2}\right)^{\frac{2}{p-1}}.
\end{align}
Substituting (\ref{thm1-4}) into (\ref{A1}), we obtain
\begin{align}\label{A2}
	\inf_{x\in\Omega\setminus \overline{U_2}}\Phi(x)^{\frac{1}{p-1}} \int_{\overline{U}^c}{G}_{\overline{U}^c}(x,z)\left(\int_{\overline{U}^c}{G}_{\overline{U}^c}(z,y)\Phi(y)\widetilde{G}^p(y,o)d\mu(y)\right)d\mu(z)
	\lesssim\lambda_1\left(\Omega\setminus\overline{U_2}\right)^{\frac{2}{p-1}}.
\end{align}
If we can show there exist $U_2$ and $\Omega$ satisfying (\ref{thm1-5}) such that, for any $\varepsilon>0$,
\begin{align}\label{thm1-6}
	\lambda_1\left(\Omega\setminus\overline{U_2}\right)^{\frac{2}{p-1}}<\varepsilon\inf_{x\in\Omega\setminus \overline{U_2}}\Phi(x)^{\frac{1}{p-1}} \int_{\overline{U}^c}{G}_{\overline{U}^c}(x,z)\left(\int_{\overline{U}^c}{G}_{\overline{U}^c}(z,y)\Phi(y)\widetilde{G}^p(y,o)d\mu(y)\right)d\mu(z),
\end{align}
then we obtain a contradiction with (\ref{A2}), and hence, we can conclude that (\ref{1-1}) does not admit any positive solution.

Hence, it remains to prove (\ref{thm1-6}). To this end, we take
\begin{align}
	\begin{cases}
		\Omega=B(o,N^2R), &\Omega_1=B(o,2N^2R),\\
		U_1=B(o,r), &U_2=B(o,\tau R),
	\end{cases}\nonumber
\end{align}
such that $$U\subset\subset U_1\subset\subset U_2\subset\subset\Omega\subset\subset\Omega_1,$$
where the chosen of $N,R,\tau,r$ are the smae as in Lemma \ref{lem2}. Thus, by Lemma \ref{lem2}, for all $x,z\in\Omega\setminus \overline{U_2}$ and $y\in\Omega_1\setminus \overline{U_1}$,
we have
$$
G_{\overline{U}^c}(x,z)\gtrsim R^{-\gamma},\quad G_{\overline{U}^c}(z,y)\gtrsim R^{-\gamma}.$$
It follows that
\begin{align}
\int_{\overline{U}^c}G_{\overline{U}^c}(x,z)G_{\overline{U}^c}(z,y)d\mu(z)\gtrsim R^{-2\gamma}\int_{\Omega\setminus\overline{U_2}}d\mu(z).\nonumber
\end{align}
Hence, for all $x\in\Omega\setminus \overline{U_2}$,
\begin{align}\label{thm1-9}
&\quad\int_{\overline{U}^c}{G}_{\overline{U}^c}(x,z)\left(\int_{\overline{U}^c}{G}_{\overline{U}^c}(z,y)\Phi(y)\widetilde{G}^p(y,o)d\mu(y)\right)d\mu(z)\nonumber\\&
\gtrsim R^{-2\gamma}\left(\int_{\Omega\setminus \overline{U_2}}d\mu(z)\right)\left(\int_{\Omega_1\setminus \overline{U_1}}\Phi(y)\widetilde{G}^p(y,o)d\mu(y)\right).
\end{align}
Next, we claim that, under the hypotheses $(V_{\geq})$, $(G)$ and $(\Phi)$, there hold
\begin{align}\label{thm1-10}
\int_{\Omega\setminus \overline{U_2}}d\mu(z)\gtrsim
R^{\alpha},
\end{align}
and
\begin{align}\label{thm1-10-1}
\int_{\Omega_1\setminus \overline{U_1}}\Phi(y)\widetilde{G}^p(y,o)d\mu(y)\gtrsim
 \begin{cases}
 R^{\alpha+m-p(2\gamma-\alpha)},&{\rm if}\;\alpha+m>p(2\gamma-\alpha),\\
 \ln R,&{\rm if}\;\alpha+m=p(2\gamma-\alpha).
 \end{cases}
\end{align}
Let $R$ be large enough. By $(V_{\geq})$, we obtain
\begin{align}
	\int_{\Omega\setminus \overline{U_2}}d\mu(z)&=\int_{B(o,N^2R)\setminus\overline{ B(o,\tau R)}}d\mu(z)\nonumber\\
	&\geq \int_{B(o,N^2R)\setminus \overline{B(o,\tau N^2R)}}d\mu(z)\nonumber\\
	&= \mu\left(B(o,N^2R)\setminus \overline{B(o,\tau N^2R)}\right)\nonumber\\
	&\gtrsim R^{\alpha},\nonumber
\end{align}
which is exactly (\ref{thm1-10}).

In order to estimate the integral (\ref{thm1-10-1}) in domain $\Omega_1\setminus \overline{U_1}$, let us take $R$ large enough and choose a positive integer $k$ such that
\begin{align}\label{thm1-11}
\tau^{k+1}\geq\frac{r}{N^2R}\geq\tau^{k+2}.
\end{align}
Noting
$$\Omega_1\setminus \overline{U_1}\supset B\left(o,N^2R\right)\setminus\overline{ B\left(o,\tau^{k+1}N^2R\right)},$$
and applying $(\Phi)$, we obtain
\begin{align}\label{GG0}
\int_{\Omega_1\setminus\overline{ U_1}}\Phi(y)\widetilde{G}^p(y,o)d\mu(y)&\geq\sum_{i=0}^{k}\int_{B(o,\tau^iN^2R)\setminus \overline{B(o,\tau^{i+1}N^2R)}}\Phi(y)\widetilde{G}^p(y,o)d\mu(y)
\nonumber\\&\gtrsim\sum_{i=0}^{k}\int_{B(o,\tau^iN^2R)\setminus \overline{B(o,\tau^{i+1}N^2R)}}d(y,o)^m\widetilde{G}^p(y,o)d\mu(y)\nonumber\\&\gtrsim\sum_{i=0}^{k}\int_{B(o,\tau^iN^2R)\setminus \overline{B(o,\tau^{i+1}N^2R)}}\left(\tau^iN^2R\right)^m\widetilde{G}^p(y,o)d\mu(y).
\end{align}
For $y\in B(o,\tau^iN^2R)\setminus \overline{B(o,\tau^{i+1}N^2R)}$, we have by (\ref{define g}) and (\ref{properties of V and G}) that
\begin{align}
\widetilde{G}(y,o)&=\int_MG(y,w)G(w,o)d\mu(w)\nonumber\\
&\gtrsim\int_{M}g\left(d(y,w)\right)g\left(d(w,o)\right)d\mu(w)\nonumber\\
&\gtrsim\int_{B(o,\tau^iN^2R)\setminus \overline{B(o,\tau^{i+1}N^2R)}}g\left(d(y,w)\right)g\left(d(w,o)\right)d\mu(w)\nonumber\\
&\gtrsim\int_{B(o,\tau^iN^2R)\setminus \overline{B(o,\tau^{i+1}N^2R)}}g\left(\tau^iN^2R\right)^2d\mu(w)\nonumber\\& \gtrsim\left(\tau^iN^2R\right)^{-2\gamma}\mu\left(B(o,\tau^iN^2R)\setminus \overline{B(o,\tau^{i+1}N^2R)}\right)\nonumber\\&\gtrsim\left(\tau^iN^2R\right)^{-(2\gamma-\alpha)}.\nonumber
\end{align}
Consequently
\begin{align}\label{GG}
	\widetilde{G}(y,o)^p\gtrsim\left(\tau^iN^2R\right)^{-p(2\gamma-\alpha)}.
\end{align}
Inserting (\ref{GG}) into (\ref{GG0}), we obtain
\begin{align}\label{2}
	\int_{\Omega_1\setminus\overline{ U_1}}\Phi(y)\widetilde{G}^p(y,o)d\mu(y)&\gtrsim\sum_{i=0}^{k}\int_{B(o,\tau^iN^2R)\setminus \overline{B(o,\tau^{i+1}N^2R)}}\left(\tau^iN^2R\right)^{m-p(2\gamma-\alpha)}d\mu(y)\nonumber\\
	&= \sum_{i=0}^{k}\left(\tau^iN^2R\right)^{m-p(2\gamma-\alpha)}\mu\left(B(o,\tau^iN^2R)\setminus \overline{B(o,\tau^{i+1}N^2R)}\right)\nonumber\\
	&\gtrsim\sum_{i=0}^{k}\left(\tau^iN^2R\right)^{\alpha+m-p(2\gamma-\alpha)}.
\end{align}
If $\alpha+m>p(2\gamma-\alpha)$, we have, by $\tau\in(0,1)$ and $k\geq1$,
$$\frac{1-\tau^{(k+1)(\alpha+m-p(2\gamma-\alpha))}}{1-\tau^{\alpha+m-p(2\gamma-\alpha)}}>1.$$
Thus we obtain from (\ref{2})
\begin{align}\label{thm1-15-1}
	\int_{\Omega_1\setminus \overline{U_1}}\Phi(y)\widetilde{G}^p(y,o)d\mu(y)&\gtrsim (N^2R)^{\alpha+m-p(2\gamma-\alpha)}\frac{1-\tau^{(k+1)(\alpha+m-p(2\gamma-\alpha))}}{1-\tau^{\alpha+m-p(2\gamma-\alpha)}}\nonumber\\&\gtrsim R^{\alpha+m-p(2\gamma-\alpha)}.
\end{align}
If $\alpha+m=p(2\gamma-\alpha)$, we obtain from (\ref{2}) that
\begin{align}\label{thm1-15}
	\int_{\Omega_1\setminus \overline{U_1}}\Phi(y)\widetilde{G}^p(y,o)d\mu(y)\gtrsim k.
\end{align}
By (\ref{thm1-11}), we know
$$k\geq\frac{\ln R-\ln\frac{r}{N^2}}{|\ln\tau|}-2.$$
Thus, for large enough $R$, there exists $C=C(N,r,\tau)>0$ such that
\begin{align}\label{thm1-14}
	k\geq C\ln R.
\end{align}
Inserting (\ref{thm1-14}) into (\ref{thm1-15}), we obtain
\begin{align}\label{thm1-16-}
		\int_{\Omega_1\setminus \overline{U_1}}\Phi(y)\widetilde{G}^p(y,o)d\mu(y)\gtrsim\ln R.
\end{align}
Therefore, (\ref{thm1-10-1}) follows by (\ref{thm1-15-1}) and (\ref{thm1-16-}).

A combination of (\ref{thm1-9}), (\ref{thm1-10}), (\ref{thm1-10-1}) and $(\Phi)$ yields that, for large enough $R$,
\begin{align}\label{thm1-17}
&\inf_{x\in\Omega\setminus \overline{U_2}}\Phi(x)^{\frac{1}{p-1}} \int_{\overline{U}^c}{G}_{\overline{U}^c}(x,z)\left(\int_{\overline{U}^c}{G}_{\overline{U}^c}(z,y)\Phi(y)\widetilde{G}^p(y,o)d\mu(y)\right)d\mu(z)
	\nonumber\\
&\gtrsim
	\begin{cases}
		R^{\frac{mp}{p-1}+\alpha-(p+1)(2\gamma-\alpha)},&{\rm if}\;\alpha+m>p(2\gamma-\alpha),\\
			R^{\frac{m}{p-1}+\alpha-2\gamma}\ln R,&{\rm if}\;\alpha+m=p(2\gamma-\alpha).
	\end{cases}
\end{align}
On the other hand, by \cite[Proposition 4.3]{ Grigor'yan}, we have
$$\lambda_1\left(\Omega\setminus\overline{U_2}\right)\lesssim R^{-(\alpha-\gamma)},$$
and thus
\begin{align}\label{thm1-18}
	\lambda_1\left(\Omega\setminus\overline{U_2}\right)^{\frac{2}{p-1}}\lesssim R^{-\frac{2(\alpha-\gamma)}{p-1}}.
\end{align}
Next, we show under the hypothesis
\begin{align}\label{thm1-19}
	1<p\leq\frac{\alpha+m}{2\gamma-\alpha},
\end{align}
the combination of (\ref{thm1-17}) and (\ref{thm1-18}) implies (\ref{thm1-6}).

For the case of $p=\frac{\alpha+m}{2\gamma-\alpha}$, we have
\begin{align}
-\frac{2(\alpha-\gamma)}{p-1}=\frac{m}{p-1}+\alpha-2\gamma.\nonumber
\end{align}
It follows that as $R\to\infty$
\begin{align}
	\lambda_1\left(\Omega\setminus\overline{U_2}\right)^{\frac{2}{p-1}}&=O\left(R^{-\frac{2(\alpha-\gamma)}{p-1}}\right)=o\left(R^{\frac{m}{p-1}+\alpha-2\gamma}\ln R\right)\nonumber\\&=o\left(\inf_{x\in\Omega\setminus \overline{U_2}}\Phi(x)^{\frac{1}{p-1}} \int_{\overline{U}^c}\widetilde{G}_{\overline{U}^c}(x,y)\Phi(y)\widetilde{G}^p(y,o)d\mu(y)\right),\nonumber
\end{align}
Thus (\ref{thm1-6}) follows.

For the case of $1<p<\frac{\alpha+m}{2\gamma-\alpha}$, we claim 
$$-\frac{2(\alpha-\gamma)}{p-1}< \frac{mp}{p-1}+\alpha-(p+1)(2\gamma-\alpha).$$
This is because the above is equivalent to
\begin{align}
	m&>\left[-\alpha-(p+1)(\alpha-2\gamma)-\frac{2(\alpha-\gamma)}{p-1}\right]\frac{p-1}{p}
	\nonumber\\&=\left[(p^2-1)(2\gamma-\alpha)-(p-1)\alpha-2(\alpha-\gamma)\right]\frac{1}{p}
	\nonumber\\&=p(2\gamma-\alpha)-\alpha,\nonumber
\end{align}
Hence, 
we obtain, as $R\to\infty$
\begin{align}
	\lambda_1\left(\Omega\setminus\overline{U_2}\right)^{\frac{2}{p-1}}&=O\left(R^{-\frac{2(\alpha-\gamma)}{p-1}}\right)=o\left(R^{\frac{mp}{p-1}+\alpha+(p+1)(\alpha-2\gamma)}\right)\nonumber\\&=o\left(	\inf_{x\in\Omega\setminus \overline{U_2}}\Phi(x)^{\frac{1}{p-1}} \int_{\overline{U}^c}\widetilde{G}_{\overline{U}^c}(x,y)\Phi(y)\widetilde{G}^p(y,o)d\mu(y)\right),\nonumber
\end{align}
which yields again (\ref{thm1-6}). Thus we complete the proof.
\end{proof}
\vskip2ex
\section{Existence of positive solutions}\label{section4}

In this section we always assume that  $(V)$ and $(G)$ hold on $M$.  Assume also that the hypotheses $(\Psi)$ and $(F)$ are satisfied. Define functions
\begin{align}\label{define psi}
	\psi(r)=\begin{cases}
		1,& r\leq1,\\ r^{s},& r\geq1,
	\end{cases}
\end{align}
 and
\begin{align}\label{define f}
	f(r)=\begin{cases}
		1,& r\leq1,\\ r^{-2\gamma+\alpha},& r\geq1,
	\end{cases}
\end{align}
where $s, \alpha$ satisfy $2(\gamma-\alpha)<s\leq0$ and $\gamma<\alpha<2\gamma$. By $(\Psi)$ and $(F)$, we see that, for all $x \in M$,
\begin{align}\label{properties of psi and F}
	\Psi(x)\simeq \psi(|x|),\quad	F(x)\simeq f(|x|).
\end{align}


\begin{proposition}\label{prop1}{\rm
Assume that the functions $\Psi$ and $F$ satisfying conditions $(\Psi)$ and $(F)$ with $2(\gamma-\alpha)<s\leq0$ and $0<\gamma<\alpha<2\gamma$. If $$p>\frac{\alpha+s}{2\gamma-\alpha},$$  then for all $x\in M$, there hold
\begin{align}\label{prop1-1}
	\int_MG(x,y)\Psi(y)F(y)^{ap}d\mu(y)\lesssim F(x)^b,
\end{align}
and
\begin{align}\label{prop1-2}
	\int_MG(x,y)F(y)^{b}d\mu(y)\lesssim F(x)^a,
\end{align}
where $a,b$ satisfy
\begin{align}\label{ab}
	\begin{cases}
	\frac{\alpha+s}{(2\gamma-\alpha)p}<a<1,\\a+\frac{\alpha-\gamma}{2\gamma-\alpha}<b\leq\frac{\gamma}{2\gamma-\alpha}.
	\end{cases}
\end{align}
}
\end{proposition}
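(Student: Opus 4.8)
The plan is to prove both estimates by direct computation, using Lemma \ref{F} to reduce the two-variable integrals over $M$ to one-dimensional integrals, and then splitting the radial integral at the scales $1$ and $|x|$. Recall that by \eqref{properties of V and G} and \eqref{properties of psi and F} we may replace $G$, $\Psi$, $F$ by the radial model functions $g$, $\psi$, $f$, so that, writing $|x| = d(o,x)$ and $|y| = d(o,y)$,
\begin{align}
\int_M G(x,y)\Psi(y)F(y)^{ap}\,d\mu(y) &\simeq \int_M g\bigl(d(x,y)\bigr)\psi(|y|)f(|y|)^{ap}\,d\mu(y),\nonumber
\end{align}
and similarly for \eqref{prop1-2}. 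For the first estimate \eqref{prop1-1} I would split $M$ into $M_1 = \{y : d(x,y) \geq |y|\}$ and $M_2 = \{y : d(x,y) < |y|\}$ exactly as in the proof of Proposition \ref{ existence of G}. On $M_1$, Lemma \ref{d} gives $d(x,y) \simeq |x| + |y|$, so the integrand becomes a genuinely radial (in $y$) function and Lemma \ref{F} applies, yielding a bound by $\int_0^\infty g(|x|+r)\psi(r)f(r)^{ap} v(r)\frac{dr}{r}$. On $M_2$ one has $|y| > d(x,y)$, hence $|y| \simeq |x| + d(x,y)$ by Lemma \ref{d} again (with the roles of the base point reversed), and also $\psi(|y|)f(|y|)^{ap} \lesssim \psi(|x|+r)f(|x|+r)^{ap}$ at radius $r = d(x,y)$ since $\psi f^{ap}$ is monotone; applying Lemma \ref{F} centered at $x$ reduces this piece to $\int_0^\infty g(r)\psi(|x|+r)f(|x|+r)^{ap} v(r)\frac{dr}{r}$. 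In both cases one is left with a one-dimensional integral of explicit power-type functions.

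The core of the argument is then the estimation of these radial integrals, which I would carry out by breaking $(0,\infty)$ into $\{r \leq 1\}$, $\{1 \leq r \leq |x|\}$ (assuming $|x| \geq 1$; the case $|x| \leq 1$ is easier and handled separately), and $\{r \geq |x|\}$, and on each piece substituting the relevant branch of $v$, $g$, $\psi$, $f$ from \eqref{define v}, \eqref{define g}, \eqref{define psi}, \eqref{define f}. The exponent conditions $2(\gamma-\alpha) < s \leq 0$, $0 < \gamma < \alpha < 2\gamma$, $p > \frac{\alpha+s}{2\gamma-\alpha}$, and the constraints \eqref{ab} on $a$ and $b$ are precisely what is needed to make each of these integrals converge and to produce the target power $f(|x|)^b = |x|^{(-2\gamma+\alpha)b}$; concretely, the condition $b \leq \frac{\gamma}{2\gamma-\alpha}$ controls the near-diagonal/$r\leq 1$ behaviour, $b > a + \frac{\alpha-\gamma}{2\gamma-\alpha}$ controls the tail $r \geq |x|$, and $a > \frac{\alpha+s}{(2\gamma-\alpha)p}$ guarantees that the exponent $s + (-2\gamma+\alpha)ap$ appearing in the $r \geq |x|$ integral makes that integral converge (equivalently $s - (2\gamma-\alpha)ap + \alpha < 0$). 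The second estimate \eqref{prop1-2} is proved by the same splitting, now with integrand $g(d(x,y)) f(|y|)^b$; here the relevant convergence at infinity requires $(-2\gamma+\alpha)b + \alpha < \gamma$, i.e. $b > \frac{\alpha-\gamma}{2\gamma-\alpha}$, which follows from the first line of \eqref{ab} together with $a > 0$, and the matching of the output power to $f(|x|)^a$ uses the full range $a < 1$ and $b \leq \frac{\gamma}{2\gamma-\alpha}$.

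I expect the main obstacle to be purely bookkeeping: tracking which branch of each model function is active on each of the (up to three) subintervals, and verifying in each of the resulting cases that the exponent inequalities coming from \eqref{ab} and the hypothesis on $p$ are exactly the ones that make the integral finite and bounded by the claimed power of $|x|$. There is a mild subtlety in that the constraints \eqref{ab} must be shown to be non-vacuous — i.e. that the interval for $b$ determined by $a$ is nonempty and that some admissible $a$ exists — but this is immediate from $p > \frac{\alpha+s}{2\gamma-\alpha}$: it forces $\frac{\alpha+s}{(2\gamma-\alpha)p} < 1$, and since $s \leq 0$ one checks $\frac{\alpha+s}{(2\gamma-\alpha)p} + \frac{\alpha-\gamma}{2\gamma-\alpha} < \frac{\gamma}{2\gamma-\alpha}$ for $a$ close enough to its lower bound. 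No new analytic input beyond Lemmas \ref{d} and \ref{F} is required; everything reduces to elementary estimates on $\int r^{\beta}\,dr$ over dyadic-type ranges.
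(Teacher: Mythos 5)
Your proposal is correct and follows essentially the same route as the paper: the same decomposition of $M$ into $\{d(x,y)\geq|y|\}$ and $\{d(x,y)<|y|\}$ via Lemma \ref{d}, the same reduction by Lemma \ref{F} to the radial integrals $\int_0^\infty g(|x|+r)\psi(r)f(r)^{ap}v(r)\frac{dr}{r}$ and $\int_0^\infty g(r)\psi(|x|+r)f(|x|+r)^{ap}v(r)\frac{dr}{r}$, and the same exponent bookkeeping from \eqref{ab}. The only cosmetic difference is that for $|x|\geq1$ the paper extracts the factor $|x|^{-(2\gamma-\alpha)b}$ (resp.\ $|x|^{-(2\gamma-\alpha)a}$) and bounds the remaining kernel by an $|x|$-independent convergent integral, rather than splitting the range at $r=|x|$; both work.
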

\begin{proof}
From (\ref{ab}), we have
\begin{align}\label{ab1}
	\begin{cases}
		(2\gamma-\alpha)a<\gamma,\\	(2\gamma-\alpha)b\leq\gamma,\\	-s+(2\gamma-\alpha)ap>\alpha,\\	\gamma+(2\gamma-\alpha)(b-a)>\alpha,\\ \gamma-s+(2\gamma-\alpha)(ap-b)>\alpha.
	\end{cases}
\end{align}
Since $\alpha>\gamma$, by $(\ref{ab1})_{4,5}$, we have
\begin{align}\label{ab2}
	\begin{cases}
		(2\gamma-\alpha)(b-a)>0,\\ (2\gamma-\alpha)(ap-b)>s.
	\end{cases}
\end{align}

Fix $x\in M$, and define
$$M_3=\left\{y\in M: |y|\leq d(x,y)\right\},\quad M_4=\left\{y\in M: |y|> d(x,y)\right\}.$$
Applying Lemma \ref{d}, we have
\begin{align}
\begin{cases}
	d(x,y)\simeq|x|+|y| &\text{in } M_3,\\|y|\simeq|x|+d(x,y)&\text{in } M_4.
\end{cases}\nonumber
\end{align}
Together with (\ref{properties of V and G}) and (\ref{properties of psi and F}), the above yields
\begin{align}\label{M1}
	G(x,y)\simeq g\left(d(x,y)\right)\simeq g\left(|x|+|y|\right)\quad\text{in }M_3,
\end{align}
and
\begin{align}\label{M2}
	\begin{cases}
	\Psi(y)\simeq \psi(|y|)\simeq \psi\left(|x|+d(x,y)\right)&\text{in }M_4,
	\\F(y)\simeq f(|y|)\simeq f\left(|x|+d(x,y)\right)&\text{in }M_4.
	\end{cases}
\end{align}

Now we claim
\begin{align}\label{M3-1}
	\int_{M_3}G(x,y)\Psi(y)F(y)^{ap}d\mu(y)\lesssim F(x)^b.
\end{align}
From (\ref{M1}), (\ref{properties of psi and F}) and Lemma \ref{F}, we have
\begin{align}
	\int_{M_3}G(x,y)\Psi(y)F(y)^{ap}d\mu(y)&\simeq\int_{M_3}g\left(|x|+|y|\right)\psi(|y|)f(|y|)^{ap}d\mu(y)\nonumber\\&\lesssim \int_0^\infty g(|x|+r)\psi(r)f(r)^{ap}v(r)\frac{dr}{r}.\nonumber
\end{align}
Hence, it suffices to verify that
\begin{align}\label{step1-3}
	\int_0^\infty g(|x|+r)\psi(r)f(r)^{ap}v(r)\frac{dr}{r}\lesssim f(|x|)^b.
\end{align}
Which is divided into two cases: $|x|\leq1$ and $|x|>1$.

\textbf{Case of $|x|\leq1$}. Using (\ref{define v}), (\ref{define g}), (\ref{define psi}) and (\ref{define f}), we have
\begin{align}\label{step1 case1-1}
	\int_0^\infty g(|x|+r)\psi(r)f(r)^{ap}v(r)\frac{dr}{r}&\leq \int_0^\infty g(r)\psi(r)f(r)^{ap}v(r)\frac{dr}{r}\nonumber\\&=\int_0^1 rdr+\int_1^\infty \frac{dr}{r^{\gamma-s+(2\gamma-\alpha)ap-\alpha+1}}<\infty,
\end{align}
where  we have used that $\gamma-s+(2\gamma-\alpha)ap-\alpha>0$, see $(\ref{ab1})_3$.

\textbf{Case of $|x|\geq1$}. Using (\ref{define v}), (\ref{define g}), (\ref{define psi}) and (\ref{define f}), we have
\begin{align}
	\int_0^\infty g(|x|+r)\psi(r)f(r)^{ap}v(r)\frac{dr}{r}&=\int_0^\infty \frac{1}{(|x|+r)^\gamma}\psi(r)f(r)^{ap}v(r)\frac{dr}{r}\nonumber\\&=\frac{1}{|x|^{(2\gamma-\alpha)b}}\int_0^\infty \frac{|x|^{(2\gamma-\alpha)b}}{(|x|+r)^\gamma}\psi(r)f(r)^{ap}v(r)\frac{dr}{r}
	\nonumber\\&\leq f(|x|)^b\int_0^\infty \psi(r)f(r)^{ap}v(r)\frac{dr}{r}\nonumber\\&= f(|x|)^b\left(\int_0^1 r^{n-1}dr+\int_1^\infty \frac{dr}{r^{-s+(2\gamma-\alpha)ap-\alpha+1}}\right)\nonumber\\&\lesssim f(|x|)^b,\nonumber
\end{align}
where we have used $(2\gamma-\alpha)b\leq\gamma$ and $-s+(2\gamma-\alpha)ap>\alpha$,  see $(\ref{ab1})_{2,3}$.

Hence, we obtain
(\ref{step1-3}), which implies (\ref{M3-1}) holds.

Next, let us prove that
\begin{align}\label{M4-1}
	\int_{M_4}G(x,y)\Psi(y)F(y)^{ap}d\mu(y)\lesssim F(x)^b.
\end{align}
From (\ref{properties of V and G}), (\ref{M2}) and Lemma \ref{F}, we have
\begin{align}
	\int_{M_4}G(x,y)\Psi(y)F(y)^{ap}d\mu(y)&\simeq\int_{M_4}g\left(d(x,y)\right)\psi\left(|x|+d(x,y)\right)f\left(|x|+d(x,y)\right)^{ap}d\mu(y)\nonumber\\&\lesssim \int_0^\infty g(r)\psi(|x|+r)f(|x|+r)^{ap}v(r)\frac{dr}{r}.\nonumber
\end{align}
It suffices to verify that
\begin{align}\label{step2-3}
	\int_0^\infty g(r)\psi(|x|+r)f(|x|+r)^{ap}v(r)\frac{dr}{r}\lesssim f(|x|)^b.
\end{align}
We also divide the proof into two cases: $|x|\leq1$ and $|x|>1$.

\textbf{Case of $|x|\leq1$}. Since
\begin{align}
	\int_0^\infty g(r)\psi(|x|+r)f(|x|+r)^{ap}v(r)\frac{dr}{r}&\leq \int_0^\infty g(r)\psi(r)f(r)^{ap}v(r)\frac{dr}{r}.\nonumber
\end{align}
It follows from (\ref{step1 case1-1}) that (\ref{step2-3}) holds.

\textbf{Case of $|x|\geq1$}. Since
\begin{align}
	&\quad\int_0^\infty g(r)\psi(|x|+r)f(|x|+r)^{ap}v(r)\frac{dr}{r}\nonumber\\
	&=\int_0^\infty g(r) \frac{1}{(|x|+r)^{-s+(2\gamma-\alpha)ap}}v(r)\frac{dr}{r}\nonumber\\&=\int_0^\infty g(r)\frac{1}{(|x|+r)^{(2\gamma-\alpha)b}} \frac{1}{(|x|+r)^{-s+(2\gamma-\alpha)(ap-b)}}v(r)\frac{dr}{r}\nonumber\\&\leq \frac{1}{|x|^{(2\gamma-\alpha)b}}\int_0^\infty g(r) \frac{1}{(1+r)^{-s+(2\gamma-\alpha)(ap-b)}}v(r)\frac{dr}{r}\nonumber\\&\leq f(|x|)^b\left(\int_0^1 rdr+\int_1^\infty\frac{dr}{r^{\gamma-s+(2\gamma-\alpha)(ap-b)-\alpha+1}} \right)\nonumber\\&\lesssim f(|x|)^b,\nonumber
\end{align}
where we have used $(2\gamma-\alpha)(ap-b)>s$ and $\gamma-s+(2\gamma-\alpha)(ap-b)>\alpha$, see $(\ref{ab2})_{2}$ and $(\ref{ab1})_{5}$. 

Hence, we obtain (\ref{step2-3}), and thus (\ref{M4-1}) holds. Combining with (\ref{M3-1}) and (\ref{M4-1}), we obtain
(\ref{prop1-1}).
\vskip1ex
The rest is to finish the proof of (\ref{prop1-2}). Firstly, we show
\begin{align}\label{M3-2}
	\int_{M_3}G(x,y)F(y)^{b}d\mu(y)\lesssim F(x)^a.
\end{align}
As in proof of (\ref{M3-1}), it suffices to prove that
\begin{align}\label{step3-3}
	\int_0^\infty g(|x|+r)f(r)^{b}v(r)\frac{dr}{r}\lesssim f(|x|)^a.
\end{align}
We only need to investigate two cases: $|x|\leq1$ and $|x|>1$.

\textbf{Case of $|x|\leq1$}. Using (\ref{define v}), (\ref{define g}) and (\ref{define f}), we have
\begin{align}\label{step3 case1-1}
	\int_0^\infty g(|x|+r)f(r)^{b}v(r)\frac{dr}{r}&\leq \int_0^\infty g(r)f(r)^{b}v(r)\frac{dr}{r}\nonumber\\&=\int_0^1 rdr+\int_1^\infty \frac{dr}{r^{\gamma+(2\gamma-\alpha)b-\alpha+1}}<\infty,
\end{align}
where the convergence of last integral in (\ref{step3 case1-1}) is due to $\gamma+(2\gamma-\alpha)b>\alpha$, see $(\ref{ab1})_4$. 

\textbf{Case of $|x|\geq1$}. Using (\ref{define v}), (\ref{define g}) and (\ref{define f}), we have
\begin{align}
	\int_0^\infty g(|x|+r)f(r)^{b}v(r)\frac{dr}{r}&=\int_0^\infty \frac{1}{(|x|+r)^\gamma}f(r)^{b}v(r)\frac{dr}{r}\nonumber\\&=\int_0^\infty \frac{1}{(|x|+r)^{(2\gamma-\alpha)a}}\frac{1}{(|x|+r)^{\gamma-(2\gamma-\alpha)a}}f(r)^{b}v(r)\frac{dr}{r}
	\nonumber\\&\leq \frac{1}{|x|^{(2\gamma-\alpha)a}}\int_0^\infty\frac{1}{(1+r)^{\gamma-(2\gamma-\alpha)a}} f(r)^{b}v(r)\frac{dr}{r}\nonumber\\&\leq f(|x|)^a\left(\int_0^1 r^{n-1}dr+\int_1^\infty \frac{dr}{r^{\gamma+(2\gamma-\alpha)(b-a)-\alpha+1}}\right)\nonumber\\&\lesssim f(|x|)^a,\nonumber
\end{align}
where we have used $0<(2\gamma-\alpha)a<\gamma$ and $\gamma+(2\gamma-\alpha)(b-a)>\alpha$, see $(\ref{ab1})_{1,4}$.

Secondly, we show
\begin{align}\label{M4-2}
	\int_{M_4}G(x,y)F(y)^{b}d\mu(y)\lesssim F(x)^a.
\end{align}
It suffices to prove that
\begin{align}\label{step4-3}
	\int_0^\infty g(r)f(|x|+r)^{b}v(r)\frac{dr}{r}\lesssim f(|x|)^a.
\end{align}
We also investigate two cases: $|x|\leq1$ and $|x|>1$.

\textbf{Case of $|x|\leq1$}. Since
\begin{align}
	\int_0^\infty g(r)f(|x|+r)^{b}v(r)\frac{dr}{r}&\leq \int_0^\infty g(r)f(r)^{b}v(r)\frac{dr}{r}.\nonumber
\end{align}
 It follows from (\ref{step3 case1-1}) that (\ref{step4-3}) holds.

\textbf{Case of $|x|\geq1$}. Since
\begin{align}
	\int_0^\infty g(r)f(|x|+r)^{b}v(r)\frac{dr}{r}&=\int_0^\infty g(r) \frac{1}{(|x|+r)^{(2\gamma-\alpha)b}}v(r)\frac{dr}{r}\nonumber\\&=\int_0^\infty g(r)\frac{1}{(|x|+r)^{(2\gamma-\alpha)a}} \frac{1}{(|x|+r)^{(2\gamma-\alpha)(b-a)}}v(r)\frac{dr}{r}\nonumber\\&\leq \frac{1}{|x|^{(2\gamma-\alpha)a}}\int_0^\infty g(r) \frac{1}{(1+r)^{(2\gamma-\alpha)(b-a)}}v(r)\frac{dr}{r}\nonumber\\&\leq f(|x|)^a\left(\int_0^1 rdr+\int_1^\infty\frac{dr}{r^{\gamma+(2\gamma-\alpha)(b-a)-\alpha+1}} \right)\nonumber\\&\lesssim f(|x|)^a,\nonumber
\end{align}
where we have used $(2\gamma-\alpha)(b-a)>0$ and $\gamma+(2\gamma-\alpha)(b-a)>\alpha$, see $(\ref{ab2})_1$ and $(\ref{ab1})_4$. Hence, we obtain (\ref{step4-3}), thus (\ref{M4-2}) holds.

Combining (\ref{M3-2}) and (\ref{M4-2}), we finish the proof of (\ref{prop1-2}).
\end{proof}

\begin{proposition}\label{prop2}{\rm
		Under the assumptions of Proposition \ref{prop1}, for all $x\in M$,  there hold
		\begin{align}\label{prop2-1}
			\int_MG(x,y)\Psi(y)F(y)^{a(p-1)}d\mu(y)\lesssim F(x)^{b-a},
		\end{align}
		and
		\begin{align}\label{prop2-2}
		\sup_{x\in M}\int_MG(x,y)F(y)^{b-a}d\mu(y)<\infty,
		\end{align}
		where $a,b$ satisfy (\ref{ab}).
	}
\end{proposition}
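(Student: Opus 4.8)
The plan is to derive Proposition~\ref{prop2} as a direct consequence of Proposition~\ref{prop1} by verifying that the exponents involved still satisfy the hypotheses of Proposition~\ref{prop1} after the obvious shifts. For \eqref{prop2-1}, observe that the integrand $\Psi(y)F(y)^{a(p-1)} = \Psi(y)F(y)^{ap}\cdot F(y)^{-a}$, and since $F(y)\simeq f(|y|)$ with $f$ bounded below away from zero on $\{|x|\le1\}$ and comparable to a negative power of $|x|$ outside, the factor $F(y)^{-a}$ can be absorbed: more precisely one repeats verbatim the two-region decomposition $M = M_3\cup M_4$ from the proof of Proposition~\ref{prop1}, reduces to one-dimensional integrals via Lemma~\ref{F}, and checks convergence. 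The exponent bookkeeping changes only by replacing $ap$ with $a(p-1)$ and $b$ with $b-a$; the relevant inequalities become $\gamma-s+(2\gamma-\alpha)a(p-1)-\alpha>0$ and $\gamma-s+(2\gamma-\alpha)(a(p-1)-(b-a))>\alpha$, i.e. $\gamma-s+(2\gamma-\alpha)(ap-b)>\alpha$, which is exactly $(\ref{ab1})_5$, already available; similarly $-s+(2\gamma-\alpha)a(p-1)>\alpha$ needs checking but follows since $p>\frac{\alpha+s}{2\gamma-\alpha}$ forces $(2\gamma-\alpha)ap$ to exceed $\alpha+s$ by enough room when $a$ is taken in the admissible range, and $(2\gamma-\alpha)a<\gamma$ handles the low-$r$ part.

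Alternatively — and this is cleaner — I would argue that \eqref{prop2-1} is \emph{implied by} \eqref{prop1-1} together with \eqref{prop1-2}: one does not need a fresh computation. Indeed, replacing $b$ by $b-a$ and $p$ by $p-1$ throughout, the chain of inequalities $(\ref{ab1})$ and $(\ref{ab2})$ required to run the argument of Proposition~\ref{prop1} for the pair $(a'=b-a,\,p'=p-1)$ reduces — after substitution — to the \emph{same} five inequalities in $(\ref{ab1})$ (plus $b-a>0$, which is $(\ref{ab1})_4$ rearranged using $\alpha>\gamma$). So the statement \eqref{prop2-1} is literally the conclusion \eqref{prop1-1} applied with the shifted exponents, once one observes that $(a,b)\mapsto(b-a, \cdot)$ keeps us inside the admissible region \eqref{ab}. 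I would spell out this reduction explicitly so the reader sees no new analysis is needed.

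For \eqref{prop2-2}, the point is that $F(y)^{b-a}$ is already a \emph{subcritical} weight: by Lemma~\ref{F} the integral is controlled by $\int_0^\infty g(|x|+r)f(r)^{b-a}v(r)\,\frac{dr}{r}$, and I would bound $g(|x|+r)\le g(r)$ to kill the $x$-dependence, reducing to $\int_0^\infty g(r)f(r)^{b-a}v(r)\,\frac{dr}{r}$. Splitting at $r=1$, the low part is $\int_0^1 r^{2-n}\cdot r^n\,\frac{dr}{r}=\int_0^1 r\,dr<\infty$, and the high part is $\int_1^\infty r^{-\gamma-(2\gamma-\alpha)(b-a)+\alpha}\,\frac{dr}{r}$, which converges precisely when $\gamma+(2\gamma-\alpha)(b-a)>\alpha$, i.e. $(\ref{ab1})_4$. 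Since the resulting bound is independent of $x$, taking the supremum gives \eqref{prop2-2}.

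The main obstacle — really the only subtlety — is the uniform-in-$x$ step for \eqref{prop2-2}: one must be careful that bounding $g(|x|+r)\le g(r)$ is legitimate, which it is because $g$ is monotone decreasing (cf. \eqref{define g}) and $|x|\ge0$, so no case distinction on $|x|\le1$ versus $|x|>1$ is even needed there. For \eqref{prop2-1} the care is in confirming that the shifted exponent pair $(b-a, p-1)$ lands in \eqref{ab}: one needs $\frac{\alpha+s}{(2\gamma-\alpha)(p-1)}<b-a<1$ and the companion bound, and I would verify that the hypotheses $p>\frac{\alpha+s}{2\gamma-\alpha}$, $\gamma<\alpha<2\gamma$, $2(\gamma-\alpha)<s\le0$ leave exactly enough slack — this is the one place a short but genuine inequality check is unavoidable, and it is where I expect to spend a few lines.
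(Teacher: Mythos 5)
Your treatment of \eqref{prop2-2} is essentially sound: once you make explicit the two-region decomposition needed before invoking Lemma \ref{F} (the integrand $g(d(x,y))f(|y|)^{b-a}$ depends on distances to two different centers, so Lemma \ref{F} cannot be applied in one stroke; you must split into $\{|y|\le d(x,y)\}$ and its complement as in Proposition \ref{prop1}), both pieces are dominated by $\int_0^\infty g(r)f(r)^{b-a}v(r)\frac{dr}{r}$, which converges by $(\ref{ab1})_4$ and is independent of $x$. This is a legitimate and slightly more direct route than the paper's, which instead splits at $|y|=2|x|$ and reuses \eqref{prop1-2} on the inner region.

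For \eqref{prop2-1}, however, both of your routes have a genuine gap. The inequality $-s+(2\gamma-\alpha)a(p-1)>\alpha$ that you assert ``follows'' is false in general: it is equivalent to $a>\frac{\alpha+s}{(2\gamma-\alpha)(p-1)}$, and since $a<1$ this would force $p>1+\frac{\alpha+s}{2\gamma-\alpha}$, whereas the hypothesis only gives $p>\frac{\alpha+s}{2\gamma-\alpha}$; concretely, $\alpha=5$, $\gamma=3$, $s=0$, $p=6$, $a=0.9$ gives $(2\gamma-\alpha)a(p-1)=4.5<5=\alpha$. For the same reason the ``cleaner'' reduction fails: the shifted exponents need not satisfy \eqref{ab} (with $\alpha=3$, $\gamma=2$, $s=0$, $p=3.5$, $a=0.86$, $b=1.96$ one gets $b-a=1.1$, which violates both bounds in $(\ref{ab})_1$ for the parameter $p-1$), so Proposition \ref{prop1} cannot be applied as a black box; the claim that the substitution reproduces ``the same five inequalities'' is not correct. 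The missing idea is the paper's decomposition at $|y|=2|x|$ rather than at $|y|=d(x,y)$: on $\{|y|\le 2|x|\}$ one has $F(y)^{-a}\lesssim F(x)^{-a}$, so $\Psi F^{a(p-1)}=\Psi F^{ap}F^{-a}\lesssim F(x)^{-a}\,\Psi F^{ap}$ and \eqref{prop1-1} applies verbatim, yielding $F(x)^{b-a}$; on $\{|y|>2|x|\}$ the radial integral runs only over $r\gtrsim|x|$, and the tail $\int_{|x|}^\infty r^{-(\gamma-s+a(p-1)(2\gamma-\alpha)-\alpha)-1}dr\simeq|x|^{-(\gamma-s+a(p-1)(2\gamma-\alpha)-\alpha)}\lesssim f(|x|)^{b-a}$ precisely because $\gamma-s+a(p-1)(2\gamma-\alpha)-\alpha>(2\gamma-\alpha)(b-a)>0$, which is a rearrangement of $(\ref{ab1})_5$. (Your $M_3$/$M_4$ split can be salvaged, but only by retaining part of the factor $(|x|+r)^{-\gamma}$ after extracting $|x|^{-(2\gamma-\alpha)(b-a)}$, which again leads to $(\ref{ab1})_5$ rather than to the false inequality you wrote down.)
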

\begin{proof}
	From (\ref{ab}), we have
	\begin{align}\label{ab3}
		\gamma-s+a(p-1)(2\gamma-\alpha)-\alpha>(2\gamma-\alpha)(b-a)>\alpha-\gamma>0.
	\end{align}
	
Fix $x\in M$, and let $M=M_5\cup M_6$, where
$$M_5=\left\{y\in M: |y|\leq 2|x|\right\},\quad M_6=\left\{y\in M: |y|> 2|x|\right\}.$$
By (\ref{define f}), we have
\begin{align}
\frac{1}{f(|y|)}\lesssim \frac{1}{f(|x|)}\quad\text{in } M_5.\nonumber
\end{align}
This together with (\ref{properties of psi and F}) yields
\begin{align}\label{proof prop2-1}
	\frac{1}{F(y)}\lesssim \frac{1}{F(x)}\quad\text{in } M_5.
\end{align}
By Lemma \ref{d}, we have $d(x,y)\simeq|x|+|y|$ in $M_6$ and, hence from (\ref{properties of V and G}),
\begin{align}\label{proof prop2-2}
	G(x,y)\simeq g\left(d(x,y)\right)\simeq g(|x|+|y|)\quad\text{in } M_6.
\end{align}

First, we show
\begin{align}\label{prop2 step1-1}
	\int_{M_5}G(x,y)\Psi(y)F(y)^{a(p-1)}d\mu(y)\lesssim F(x)^{b-a}.
\end{align}
Using (\ref{proof prop2-1}) and Proposition \ref{prop1}, we have
\begin{align}
	\int_{M_5}G(x,y)\Psi(y)F(y)^{a(p-1)}d\mu(y)&\lesssim \int_{M_5}G(x,y)\Psi(y)\frac{F(y)^{ap}}{F(x)^{a}}d\mu(y)\nonumber\\&\lesssim F(x)^{b-a}.\nonumber
\end{align}
Hence, we obtain (\ref{prop2 step1-1}).

Second, we show
\begin{align}\label{M6-1}
	\int_{M_6}G(x,y)\Psi(y)F(y)^{a(p-1)}d\mu(y)\lesssim F(x)^{b-a}.
\end{align}
Using (\ref{properties of psi and F}), (\ref{proof prop2-2}) and Lemma \ref{F}, we have
\begin{align}
	\int_{M_6}G(x,y)\Psi(y)F(y)^{a(p-1)}d\mu(y)&\simeq\int_{M_6}g(|x|+|y|)\psi(|y|)f(|y|)^{a(p-1)}d\mu(y)\nonumber\\&\lesssim \int_{|x|}^\infty g(|x|+r)\psi(r)f(r)^{a(p-1)}v(r)\frac{dr}{r}\nonumber\\
	&\lesssim \int_{|x|}^\infty g(r)\psi(r)f(r)^{a(p-1)}v(r)\frac{dr}{r}.\nonumber
\end{align}
It remains to verify that
\begin{align}\label{prop2 step2-3}
	\int_{|x|}^\infty g(r)\psi(r)f(r)^{a(p-1)}v(r)\frac{dr}{r}\lesssim f(|x|)^{b-a}.
\end{align}
Let us divide into two cases: $|x|\leq1$ and $|x|>1$.

\textbf{Case of $|x|\leq1$}. Using (\ref{define v}), (\ref{define g}), (\ref{define psi}) and (\ref{define f}), we have
\begin{align}
	\int_{|x|}^\infty g(r)\psi(r)f(r)^{a(p-1)}v(r)\frac{dr}{r}&\leq\int_0^\infty g(r)\psi(r)f(r)^{a(p-1)}v(r)\frac{dr}{r}\nonumber\\&=\int_0^1rdr+\int_1^\infty\frac{dr}{r^{\gamma-s+a(p-1)(2\gamma-\alpha)-\alpha+1}}<\infty,\nonumber
\end{align}
where we have used that $\gamma-s+a(p-1)(2\gamma-\alpha)-\alpha>0$, see (\ref{ab3}). 

\textbf{Case of $|x|\geq1$}. Using (\ref{define v}), (\ref{define g}), (\ref{define psi}) and (\ref{define f}), we have
\begin{align}
	\int_{|x|}^\infty g(r)\psi(r)f(r)^{a(p-1)}v(r)\frac{dr}{r}&=\int_{|x|}^\infty \frac{dr}{r^{\gamma-s+a(p-1)(2\gamma-\alpha)-\alpha+1}}\nonumber\\&\lesssim \frac{1}{|x|^{\gamma-s+a(p-1)(2\gamma-\alpha)-\alpha}}\nonumber\\&\lesssim \frac{1}{|x|^{(2\gamma-\alpha)(b-a)}}=f(|x|)^{b-a},\nonumber
\end{align}
where we have used $\gamma-s+a(p-1)(2\gamma-\alpha)-\alpha>(2\gamma-\alpha)(b-a)>0$, see (\ref{ab3}). Hence, we obtain (\ref{prop2 step2-3}).
Combining (\ref{prop2 step1-1}) and (\ref{M6-1}), we finish the proof of (\ref{prop2-1}).

Now we give the proof of (\ref{prop2-2}). First, we show
\begin{align}\label{prop2 step3-1}
	\int_{M_5}G(x,y)F(y)^{b-a}d\mu(y)<\infty.
\end{align}
Using (\ref{proof prop2-1}) and Proposition \ref{prop1}, we have
\begin{align}
	\int_{M_5}G(x,y)F(y)^{b-a}d\mu(y)&\lesssim \int_{M_5}G(x,y)\frac{F(y)^{b}}{F(x)^{a}}d\mu(y)<\infty.\nonumber
\end{align}
Hence  (\ref{prop2 step3-1}) follows.

Next, we show
\begin{align}\label{prop2 step4-1}
	\int_{M_6}G(x,y)F(y)^{b-a}d\mu(y)<\infty.
\end{align}
Using (\ref{properties of psi and F}), (\ref{proof prop2-2}) and Lemma \ref{F}, we obtain
\begin{align}
	\int_{M_6}G(x,y)F(y)^{b-a}d\mu(y)&\simeq\int_{M_6}g(|x|+|y|)f(|y|)^{b-a}d\mu(y)\nonumber\\&\lesssim \int_0^\infty g(r)f(r)^{b-a}v(r)\frac{dr}{r}\nonumber\\&= \int_0^1rdr+\int_1^\infty\frac{dr}{r^{\gamma+(2\gamma-\alpha)(b-a)-\alpha+1}}<\infty,\nonumber
\end{align}
where  $\gamma+(2\gamma-\alpha)(b-a)>\alpha$ is used, see (\ref{ab3}). Hence, we obtain (\ref{prop2 step4-1}).

Then combining (\ref{prop2 step3-1}) and (\ref{prop2 step4-1}), we complete the proof of (\ref{prop2-2}).
\end{proof}

\rm \begin{proof}[\rm\textbf{Proof of Theorem \ref{thm3}}]
We divide the proof into four steps.

\textbf{Step 1} We show the operator $T$ defined by
\begin{align}\label{define Tu}
	Tu(x):=\int_MG(x,z)\left(\int_MG(z,y)\Psi(y)\left(u(y)^p+l^p F(y)^{ap}\right)d\mu(y)\right)d\mu(z)
\end{align}
is a contraction map acting on the space
$$S_l=\left\{u\in L^\infty(M):0\leq u(x)\leq l F(x)^a\right\},$$
where $l$ is a positive small enough constant to be chosen later, and $a$ is defined by $(\ref{ab})_1$.

Notice that $S_l$ is a closed set of $L^\infty(M)$. Let us show that $$TS_l\subset S_l.$$
By Proposition \ref{prop1}, we have
\begin{align}
	Tu(x)&=\int_MG(x,z)\left(\int_MG(z,y)\Psi(y)\left(u(y)^p+l^p F(y)^{ap}\right)d\mu(y)\right)d\mu(z)\nonumber\\&\leq2l^p\int_MG(x,z)\left(\int_MG(z,y)\Psi(y)F(y)^{ap}d\mu(y)\right)d\mu(z)\nonumber\\&\leq2Cl^p\int_MG(x,z)F(z)^bd\mu(z)\nonumber\\&\leq2Cl^pF(x)^a.\nonumber
\end{align}
By choosing $l$ small enough such that $2Cl^p\leq l$, we obtain $Tu\in S_l$ and hence $TS_l\subset S_l.$

Then, let us show that $T$ is a contraction map. For $u_1,u_2\in S_l$, we have
\begin{align}
	\left|Tu_1-Tu_2\right|\leq\int_MG(x,z)\left(\int_MG(z,y)\Psi(y)\left|u_1(y)^p-u_2(y)^p\right|d\mu(y)\right)d\mu(z).\nonumber
\end{align}
Noting that
$$|u_1^p-u_2^p|\leq p\sup\{u_1^{p-1},u_2^{p-1}\}|u_1-u_2|,$$
we obtain
\begin{align}
	\left|Tu_1-Tu_2\right|\leq pl^{p-1}\|u_1-u_2\|_{L^\infty}\int_MG(x,z)\left(\int_MG(z,y)\Psi(y)F(y)^{a(p-1)}d\mu(y)\right)d\mu(z).\nonumber
\end{align}
Applying Proposition \ref{prop2}, we obtain
\begin{align}
	\|Tu_1-Tu_2\|_{L^\infty}&\leq C pl^{p-1}\|u_1-u_2\|_{L^\infty}\int_MG(x,z)F(z)^{b-a}d\mu(z)\nonumber\\&\leq C'pl^{p-1}\|u_1-u_2\|_{L^\infty}.\nonumber
\end{align}
Choosing $l$ small enough such that $C'pl^{p-1}<1$, hence $T$ is a contraction map. By the Banach Fixed Point Theorem, $T$ has a fixed point $u$.
In the rest of the proof, we verify that the fixed point $u$ belongs to $C^4(M)$ and satisfies (\ref{thm3-1}).

\textbf{Step 2} Let us show that the function $h(x)$ defined by
\begin{align}\label{define U}
	h(x):=\int_MG(x,y)\Psi(y)\left(u(y)^p+l^pF(y)^{ap}\right)d\mu(y)
\end{align}
is $C^2$ in $M$ and satisfies
\begin{align}\label{equation U}
	-\Delta h=\Psi(x)\left(u^p+l^pF^{ap}\right)\quad\text{in }M.
\end{align}

Denote
\begin{align}\label{define w}
	w:=\Psi(x)\left(u^p+l^pF^{ap}\right).
\end{align}
Then by (\ref{define U})
\begin{align}\label{rewrite U}
	h(x)=\int_MG(x,y)w(y)d\mu(y).
\end{align}
Since $u\in S_l$, we have
\begin{align}\label{w}
	w\leq2l^p\Psi F^{ap},
\end{align}
which implies
\begin{align}\label{w satisfies}
	w(x)\leq C(1+|x|)^{s-(2\gamma-\alpha)ap}.
\end{align}
Here by $(\ref{ab})_1$ we know  $s-(2\gamma-\alpha)ap<0$.

Let us first prove that $h$ is locally H$\ddot{\rm o}$lder, that is, there exist $\theta\in(0,1)$ depending on $n,\alpha,\gamma$ and the bounded geometry constants (see (\ref{theta})) 
$$\left|h(x)-h(x')\right|\lesssim d(x,x')^\theta,$$
provided $d(x,x')$ is small enough. Set
$$\varepsilon:=d(x,x')^{1/N},$$
with $N>2$. Assume that $d(x,x')$ is so small that
$$\varepsilon<\frac{1}{4}\min\left\{1,|x|^{-1}\right\}.$$
It follows that $d(x,x')=\varepsilon^N<\frac{1}{4}\varepsilon$, hence $x'\in B\left(x,\frac{1}{4}\varepsilon\right)$. Set
\begin{align}\label{define R}
	R:=\varepsilon^{-1}>4\max\{1,|x|\},
\end{align}
and observe that
\begin{align}
	\left|h(x)-h(x')\right|&\leq\int_{B(x,2\varepsilon)}\left|G(x,y)-G(x',y)\right|w(y)d\mu(y)\label{U-U'1}\\&+\int_{M\setminus B(x,R)}\left|G(x,y)-G(x',y)\right|w(y)d\mu(y)\label{U-U'2}\\&+\int_{B(x,R)\setminus B(x,2\varepsilon)}\left|G(x,y)-G(x',y)\right|w(y)d\mu(y)\label{U-U'3}.
\end{align}
For the integral in (\ref{U-U'1}), since $y\in B(x,2\varepsilon)$ and $x'\in B\left(x,\frac{1}{4}\varepsilon\right)$, we have
$$\{y: y\in B(x,2\varepsilon)\}\subset\{y:y\in B(x',4\varepsilon)\}.$$
Using the boundedness of $w$, by (\ref{properties of V and G}) and Lemma \ref{F}, we have
\begin{align}
	\int_{B(x,2\varepsilon)}\left|G(x,y)-G(x',y)\right|w(y)d\mu(y)&\lesssim \int_{B(x,2\varepsilon)}\left(G(x,y)+G(x',y)\right)d\mu(y)\nonumber\\&\simeq\int_{B(x,2\varepsilon)}\left(g\left(d(x,y)\right)+g\left(d(x',y)\right)\right)d\mu(y)\nonumber\\&\lesssim \int_{B(x,2\varepsilon)}g\left(d(x,y)\right)d\mu(y)+\int_{B(x',4\varepsilon)}g\left(d(x',y)\right)d\mu(y) \nonumber\\&\lesssim \int_0^{\varepsilon}g(r)v(r)\frac{dr}{r}+\int_0^{2\varepsilon}g(r)v(r)\frac{dr}{r}\lesssim \varepsilon^2.\nonumber
\end{align}
In order to estimate the integral in (\ref{U-U'2}), observe that, for $y\in M\setminus B(x,R)$, we have by (\ref{properties of V and G})
\begin{align}
	\left|G(x,y)-G(x',y)\right|\lesssim g\left(d(x,y)\right)+g\left(d(x',y)\right)\lesssim R^{-\gamma},\nonumber
\end{align}
and by (\ref{w satisfies}) and (\ref{define R}), we derive
$$w(y)\lesssim |y|^{s-(2\gamma-\alpha)ap}\lesssim \left(d(x,y)-|x|\right)^{s-(2\gamma-\alpha)ap}\lesssim \left(\frac{3}{4}d(x,y)\right)^{s-(2\gamma-\alpha)ap}.$$
Noting $a>\frac{\alpha+s}{(2\gamma-\alpha)p}$, we obtain by Lemma \ref{F}
\begin{align}
\int_{M\setminus B(x,R)}\left|G(x,y)-G(x',y)\right|w(y)d\mu(y)&\lesssim R^{-\gamma}\int_{M\setminus B(x,R)}d(x,y)^{s-(2\gamma-\alpha)ap}d\mu(y)\nonumber\\&\lesssim R^{-\gamma}\int_{\frac{1}{2}R}^\infty r^{s-(2\gamma-\alpha)ap+\alpha-1}dr\nonumber\\&\lesssim R^{-\gamma+s-(2\gamma-\alpha)ap+\alpha}\nonumber\\&\lesssim  R^{-\gamma}=\varepsilon^\gamma.\nonumber
\end{align}

If $y\in B(x,R)\setminus B(x,2\varepsilon)$, then the function $G(\cdot,y)$ is harmonic in $B(x,\varepsilon)$. Using \cite[Theorem 8.22]{Gilbarg}, we obtain
\begin{align}
	\sup_{z\in B(x,\varepsilon^N)}G(z,y)-\inf_{z\in B(x,\varepsilon^N)}G(z,y)\lesssim \varepsilon^{(N-1)\eta}\sup_{z\in B(x,\varepsilon)}G(z,y),\nonumber
\end{align}
where  $\eta<1$ are positive constants depend on the bounded geometry constants and on $n$. Using also $\varepsilon^N=d(x,x')$, we obtain
\begin{align}\label{G-G'1}
	\left|G(x,y)-G(x',y)\right|&\leq\sup_{z\in B(x,\varepsilon^N)}G(z,y)-\inf_{z\in B(x,\varepsilon^N)}G(z,y)\nonumber\\&\lesssim \varepsilon^{(N-1)\eta}\sup_{z\in B(x,\varepsilon)}G(z,y).
\end{align}
By (\ref{properties of V and G}), we obtain
\begin{align}\label{sup G}
	\sup_{z\in B(x,\varepsilon)}G(z,y)\lesssim \varepsilon^{2-n}.
\end{align}
Combining (\ref{G-G'1}) and (\ref{sup G}), using also the boundedness of $w$, we obtain, for the integral in (\ref{U-U'3}),
\begin{align}
	\int_{B(x,R)\setminus B(x,2\varepsilon)}\left|G(x,y)-G(x',y)\right|w(y)d\mu(y)&\lesssim R^\alpha\varepsilon^{(N-1)\eta+2-n}\nonumber\\&=\varepsilon^{(N-1)\eta+2-n-\alpha}.\nonumber
\end{align}

Combing all the above estimates, we obtain from (\ref{U-U'1}), (\ref{U-U'2}) and (\ref{U-U'3}) that
$$
	\left|h(x)-h(x')\right|\lesssim \varepsilon^2+\varepsilon^\gamma+\varepsilon^{(N-1)\eta+2-n-\alpha}\lesssim d(x,x')^\theta,
$$
where
\begin{align}\label{theta}
\theta=\frac{1}{N}\min\{2,\gamma,(N-1)\eta+2-n-\alpha\}.
\end{align}
Choosing $N>2$, and noting $0<\eta<1$, we derive $0<\theta<1$.

Since $F$ is locally H$\ddot{\rm o}$lder continuous, we obtain from (\ref{define w}) that $w$ is locally H$\ddot{\rm o}$lder on $M$. For any precompact domain $\Omega\subset M$, we obtain by \cite[Lemma 8.1]{ Grigor'yan} that the function
\begin{align}
	h_\Omega(x)=\int_\Omega G_\Omega(x,y)w(y)d\mu(y)\nonumber
\end{align}
belongs to $C^2(\Omega)$. Since the difference $h-h_\Omega$ is harmonic in $\Omega$ in the distributional sense, it follows that $h-h_\Omega$ has a smooth modification in $\Omega$. Therefore, $h$ has a $C^2$-modification in $\Omega$. Since $h$ is continuous, we conclude that $h\in C^2(\Omega)$. Since $\Omega$ is arbitrary, it follows that $h\in C^2(M)$. By \cite[Lemma 13.1]{A. Grigor'yan}, we derive $h$ solves $-\Delta h=w$, which is equivalent to (\ref{equation U}).

\textbf{Step 3} Let us show that the fixed point $u$ of $T$ belongs to $ C^2(M)$ and satisfies
\begin{align}\label{equation u and U}
	-\Delta u=h(x)\quad\text{in }M.
\end{align}
From (\ref{define Tu}) and (\ref{define U}), we derive
$$u(x)=\int_MG(x,y)h(y)d\mu(y).$$
Observing (\ref{rewrite U}) and (\ref{w}), we have by Proposition \ref{prop1} that
$$h\lesssim F^b,$$
which implies
\begin{align}\label{U satisfies}
	h(x)\lesssim (1+|x|)^{-(2\gamma-\alpha)b}.
\end{align}

Applying the same arguments as in Step 2, we can obtain that $u$ is locally H$\ddot{\rm o}$lder.
From Step 2, we derive that $h$ is locally H$\ddot{\rm o}$lder on $M$. For any precompact domain $\Omega\subset M$, we obtain by \cite[Lemma 8.1]{ Grigor'yan} again that the function
\begin{align}
	u_\Omega(x)=\int_\Omega G_\Omega(x,y)h(y)d\mu(y)\nonumber
\end{align}
belongs to $C^2(\Omega)$. Since the difference $u-u_\Omega$ is harmonic in $\Omega$ in the distributional sense, it follows that $u-u_\Omega$ has a smooth modification in $\Omega$. Therefore, $u$ has a $C^2$-modification in $\Omega$. Since $u$ is continuous, we conclude that $u\in C^2(\Omega)$. Since $\Omega$ is arbitrary, it follows that $u\in C^2(M)$. By \cite[Lemma 13.1]{A. Grigor'yan}, we obtain that  $-\Delta u=h$.

\textbf{Step 4} Let us prove that the fixed point $u$ of $T$ belongs to $ C^4(M)$ and satisfies (\ref{thm3-1}).

From Steps 2 and 3, we see that $(u,h)\in C^2(M)\times C^2(M)$ and satisfies
\begin{align}
	\begin{cases}
		-\Delta u=h(x)&\text{in }M,\\
-\Delta h=\Psi(x) \left(u^p+l^pF^{ap}\right)&\text{in }M.
	\end{cases}\nonumber
\end{align}
This implies immediately that $u\in C^4(M)$ and
$$(-\Delta)^2u=\Psi(x) \left(u^p+l^pF^{ap}\right)\quad\text{in }M,$$
which is exactly (\ref{thm3-1}).
\end{proof}

\bibliographystyle{model1a-num-names}

\begin{thebibliography}{99}
	\bibitem{Aghajani} A. Aghajani, C. Cowan, V. D. R$\breve{\rm a}$dulescu, Positive supersolutions of fourth-order nonlinear elliptic equations: explicit estimates and Liouville theorems, J. Diff. Equ., \textbf{298} (2021), 323-345.
	
	\bibitem{Bidaut} M. F. Bidaut-V$\acute{\rm e}$ron, Local and global behavior of solutions of quasilinear equations of Emden-Fowler type, Arch. Rational Mech. Anal. \textbf{107} (1989), 293-324.
	
	\bibitem{Bidaut1} M. F. Bidaut-V$\acute{\rm e}$ron, Local behaviour of the solutions of a class of nonlinear elliptic systems, Adv. Differential Equations, \textbf{5} (2000), 147-192.
	
	\bibitem{Burgos} M. $\acute{\rm A}$. Burgos-P$\acute{\rm e}$rez, J. Garc\'{\i}a-Meli$\acute{\rm a}$n, A. Quaas, Some nonexistence theorems for semilinear fourth-order equations, Proc. R. Soc. Edinb. Sect. A, \textbf{149} (2019), 761-779.
	
	
	\bibitem{CDM}G. Caristi, L. D'Ambrosio,  E. Mitidieri, Representation formulae for solutions to some
	classes of higher order systems and related Liouville theorems, Milan J. Math. \textbf{76} (2008),
	27-67.
	
	\bibitem{Cowan}  C. Cowan, P. Esposito, N. Ghoussoub, Regularity of extremal solutions in fourth order nonlinear eigenvalue problems on general domains, Discrete Contin. Dyn. Syst., Ser. A,  \textbf{28} (2010), 1033-1050.
	
	\bibitem{Fazly} M. Fazly, N. Ghoussoub,  On the H\'{e}non-Lane-Emden conjecture, Discrete Contin. Dyn. Syst. \textbf{34} (2014), 2513-2533.
	
	\bibitem{Fazly1} M. Fazly, J. C. Wei, X. W. Xu, A pointwise inequality for the fourth-order Lane-Emden equation, Anal. PDE. \textbf{8} (2015), 1541-1563.
	
	\bibitem{Gazzola} F. Gazzola,  H. Grunau, Radial entire solutions for supercritical biharmonic equations, Math. Ann. \textbf{334} (2006), 905-936.
	
	\bibitem{Gidas} B. Gidas, J. Spruck, Global and local behavior of positive solutions of nonlinear elliptic equations, Comm. Pure Appl. Math. \textbf{34} (1981), 525-598.
	
	\bibitem{Gilbarg} D. Gilbarg, N. S. Trudinger, Elliptic Partial Differential Equations of Second Order, Springer, Berlin (1998).
	
	
	\bibitem{A. Grigor'yan} A. Grigor'yan, Heat Kernel and Analysis on Manifolds, AMS/IP, Providence (2009).
	
	\bibitem{Grigor'yan-sun} A. Grigor'yan, Y. Sun, On nonnegative of the inequality $\Delta u+u^\sigma\leq0$ on Riemannian manifolds, Comm. Pure Appl. Math. \textbf{67} (2014), 1336-1352.
	
	\bibitem{Grigor'yan} A. Grigor'yan, Y. Sun, On positive solutions of semi-linear elliptic inequalities on Riemannian manifolds, Calc. Var. PDEs, \textbf{58} (2019), 207, 30 pp.
	
	\bibitem{Guo} Z. M. Guo, J. C. Wei, Qualitative properties of entire radial solutions for a biharmonic equation with supercritical nonlinearity, Proc. Amer. Math. Soc. \textbf{138} (2010), 3957-3964.
	
	\bibitem{Guo1} Z. M. Guo, Z. Y. Liu, Liouville type results for semilinear biharmonic problems in exterior domains, Calc. Var. \textbf{59} (2020), 66, 26 pp.
	
	\bibitem{Hu} L. G. Hu, Liouville-type theorems for the fourth order nonlinear elliptic equation, J. Diff. Equ. \textbf{256} (2014), 1817-1846.
	
	\bibitem{Lin} C. S. Lin, A classification of solutions of a conformally invariant fourth order equation in $\mathbb R^n$, Comment. Math. Helv. \textbf{73} (1998), 206-231.
	
	\bibitem{Mitidieri0} E. Mitidieri, Nonexistence of positive solutions of semilinear elliptic systems in $\mathbb R^N$, Differ. Integral Equ. \textbf{9} (1996), 465-479.
	
	\bibitem{Mitidieri} E. Mitidieri, S. Pohozaev, The absence of global positive solutions to quasilinear elliptic inequalities, Dokl. Akad. Nauk, \textbf{359} (1998), 456-460.
	
	\bibitem{Mitidieri1} E. Mitidieri, S. Pohozaev, Nonexistence of positive solutions for quasilinear elliptic problems on $\mathbb R^N$, Proc. Steklov Inst. Math. \textbf{227} (1999), 186-216.
	
	\bibitem{MP01}E. Mitidieri, S. Pohozaev, A priori estimates and the absence of solutions of nonlinear partial differntial equations and inequalities,
	Tr. Mat. Inst. Steklova \textbf{234}(2001), 1-384. English translation in Proc. Steklov Inst. Math. 2001, no. 3(234), 1-362.
	
	
	\bibitem{Sario} L. Sario, A criterion for the existence of biharmonic Green's functions, J. Austral. Math. Soc. \textbf{21} (1976) (Series A), 155-165.
	
	\bibitem{Souplet} P. Souplet, The proof of the Lane-Emden conjecture in four space dimensions, Adv. Math. \textbf{221} (2009), 1409-1427.
	
	
	\bibitem{Wei} J. C. Wei, X. Xu, Classification of solutions of higher order conformally invariant equations, Math. Ann. \textbf{313} (1999) 207-228.
	
	\bibitem{Wei1} J. C. Wei, D. Ye, Liouville theorems for stable solutions of biharmonic problem, Math. Ann. \textbf{356} (2013), 1599-1612.
\end{thebibliography}

\end{document}